\theoremstyle{definition}
\newtheorem{theorem}{Theorem}[section]
\newtheorem{lemma}[theorem]{Lemma}
\newtheorem{corollary}[theorem]{Corollary}
\theoremstyle{definition}
\newtheorem{definition}[theorem]{Definition}
\newtheorem{example}[theorem]{Example}
\theoremstyle{remark}
\newtheorem{remark}[theorem]{Remark}
\newcommand{\norm}[1]{\left\lVert#1\right\rVert}
\newcommand{\mo}[1]{\left|#1\right|}
\numberwithin{equation}{section}
\begin{document}

\title[Spectrum of the Rhaly operator on weighted $c_0$ spaces and operator ideals]{Spectral properties of the Rhaly operator on weighted null sequence spaces and associated operator ideals}


\author{Arnab Patra}
\address{Department of mathematics, Indian Institute of Technology Bhilai, Chhattisgarh, India 492015}
\curraddr{}
\email{arnabp@iitbhilai.ac.in}

\author{Jyoti Rani}
\address{Department of mathematics, Indian Institute of Technology Bhilai, Chhattisgarh, India 492015}
\curraddr{}
\email{jyotir@iitbhilai.ac.in}

\author{Sanjay Kumar Mahto}
\address{Department of mathematics, RNAR College Samastipur, LNMU Darbhanga, India}
\curraddr{}
\email{skmahto0777@gmail.com}

\subjclass[2020]{Primary 47A10, 47B37; Secondary 46B45}

\keywords{Rhaly operators, Infinite matrices, Fine spectrum, Sequence spaces}

\begin{abstract} 
In this article, a comprehensive study is made on the continuity, compactness, and spectrum of the lower triangular terraced matrix, introduced by H. C. Rhaly, Jr. [Houston J. Math. 15(1): 137-146, 1989], acting on the weighted null sequence spaces with bounded, strictly positive weights. Several spectral subdivisions such as point spectrum, residual spectrum, and continuous spectrum are also discussed. In addition, a new class of operator ideal $\chi_{c_0(r)}^{(s)}$ associated to the Rhaly operator on weighted $c_0$ space is defined using the concept of $s$-number and it is proved that under certain condition, $\chi_{c_0(r)}^{(s)}$ forms a quasi-Banach closed operator ideal. 
\end{abstract}
\maketitle

\section{Introduction}

The linear map $C$ which maps a sequence $\{x_n\}$ of real or complex numbers to its sequence of averages $\{\frac{x_1+\cdots + x_n}{n}\}$ is the well-known discrete Ces\`aro operator. Extensive study can be found in the literature on the continuity and spectrum of $C$ defined over  sequence spaces such as $c_0$, $c$, $\ell_p$, $bv_p \ (1 \leq p < \infty)$, $bv_0$, etc \cite{reade1985spectrum,cesaro_c0_c,curbera,gonzalez,cesaro_bvp,cesaro_bv0}. Recently Albanese et al. studied the spectral properties of Ces\`aro operator defined over the weighted sequence spaces \cite{cesaro_lpw,cesaro_c0w,cesaro_l1w}. Several generalised version of the Ces\`aro operator such as $p-$Ces\`aro operator \cite{rhaly,coskun}, discrete generalized Ces\`aro operator \cite{rhaly_discrete,inequality,oam}, $q-$Ces\`aro operator \cite{qcesaro1,qcesaro2,qcesaro3}, etc. are also studied. For a sequence $a=\{a_n\}$ of real or complex numbers Rhaly \cite{rhaly} introduced the terraced matrix $R_a$ also known as Rhaly matrix where
\[
R_a=
\begin{pmatrix}
a_1 & {0} & {0}&{0} & \cdots \vspace{1.5mm}\\
a_2 & a_2 & {0} &{0} &\cdots\vspace{1.5mm}\\
a_3 & a_3 & a_3 &{0}& \cdots\vspace{1.5mm}\\
\vdots & \vdots & \vdots&\vdots & \ddots
\end{pmatrix}
.\]
The operator represented by the above matrix is known as the Rhaly operator. The Ces\`aro operator $C$ can be obtained by taking $a_n = \frac{1}{n}.$ Rhaly \cite{rhaly} also considered the case $a_n = \frac{1}{n^p}, \ p \in \mathbb{R}$ which is the $p$-cesaro operator. Many researchers investigated the spectrum of Rhaly operators defined over several classical sequence spaces such as $c_0$ \cite{rhaly_c0_1,rhaly_c0_2,rhaly_c0_3}, $\ell_p$ \cite{rhaly_lp}, $bv_0$ \cite{rhaly_bv0}, etc. To the best of our knowledge, no investigation is carried out so far on the spectral properties of Rhaly operators over the weighted sequence spaces. The present work is an attempt in this direction. The aim of this paper is to study the continuity, compactness and spectrum of the Rhaly operator defined over the weighted null sequence space $c_0(r)$ where $r = \{r_n\}$ is the weight vector. Also using the concept of $s$-number sequences, we introduce a new class of operator ideal $\chi_{c_0(r)}^{(s)}$ related to the Rhaly operator $R_a$ acting on weighted $c_0$ spaces. It is demonstrated that under some assumptions on the sequence $a=\{a_n\}$, the class forms an quasi-Banach closed operator ideal. Moreover, an inclusion relation among operator ideals are also proved.


The remainder of this paper is organized as follows. Section 2 contains some notations and some useful results. Section 3 deals with the continuity and compactness and various spectral subdivision of Rhaly operator defined over weighted $c_0$ spaces. In section 4 we introduce a new class of operator ideal $\chi_{c_0(r)}^{(s)}$ and discussed few of its properties.

\section{Preliminaries and Notations}
Throughout the article, all the infinite sequences and matrices are indexed by the set of natural numbers $\mathbb{N}.$ Let $T: X \rightarrow Y$ be a bounded linear operator where $X$ and $Y$ are complex Banach spaces. The range space of $T$ and null space of $T$ are denoted by $R(T)$ and $N(T)$ respectively. The Banach-adjoint of $T$, denoted by $T^*$, is a bounded linear operator $T^*: Y^* \rightarrow X^*$ which is defined by
\begin{equation*}
(T^* \phi)(x) = \phi(Tx) \ \ \ \mbox{for all} \ \phi \in Y^* \ \mbox{and} \ x \in X,
\end{equation*}
where $X^*$ and $Y^*$ are the dual spaces of $X$ and $Y$ respectively. $\mathcal{B}(X,Y)$ and $\mathcal{K}(X,Y)$ denote the space of all bounded linear operators and the ideal of all compact operators form $X$ into $Y$. If $X = Y,$ then $\mathcal{B}(X,Y)$ and $\mathcal{K}(X,Y)$ are denoted by $\mathcal{B}(X)$ and $\mathcal{K}(X)$ respectively. For any $T \in \mathcal{B}(X),$ the operator norm is denoted by $\|T\|$ and the norm $\|.\|_{\infty}$ denotes the supremum norm on the sequence space $c_0$. For any operator $T \in \mathcal{B}(X),$ the resolvent set of $T$ is the set of all complex numbers $\lambda$ for which the operator $T - \lambda I$ has a bounded inverse in $X$ where $I$ is the identity operator in $X.$ The resolvent set of $T$ is denoted by $\rho(T,X).$ The complement of the resolvent set in the complex plane $\mathbb{C}$ is called the spectrum of $T$ and it is denoted by $\sigma(T,X)$. The set of points $\lambda \in \mathbb{C}$ for which $N(T- \lambda I) \neq \{0\}$ is called the point spectrum of $T$ and it is denoted by $\sigma_p(T,X)$. The set of points $\lambda \in \mathbb{C}$ for which $N(T- \lambda I) = \{0\},$ and $\overline{R(T- \lambda I)}= X $ but ${R(T- \lambda I)}\neq X $ is called the continuous spectrum of $T$ and denoted by $\sigma_c(T,X),$ and the set of points $\lambda \in \mathbb{C}$ for which $N(T- \lambda I) = \{0\}$ and $\overline{R(T- \lambda I)}\neq X $ is called the residual spectrum of $T$ and it is denoted by $\sigma_r(T,X)$. The three sets $\sigma_p(T,X),\sigma_c(T,X),\sigma_r(T,X)$ are disjoint and their union is the whole spectrum $\sigma(T,X).$

Let $\mathbb{C}^\mathbb{N}$ denotes the space of all complex sequences. For an infinite positive real sequence $r = \{r_k\},$ the weighted null sequence space $c_0(r)$ is defined as $$c_0(r)=\{\{x_k\}\in \mathbb{C}^{\mathbb{N}} : \lim_{k\to\infty} r_kx_k=0\}$$ equipped with the norm $\|x\|_r=\sup_k |x_k|r_k$. If $D_r$ is the diagonal matrix with $i$-th diagonal entry $r_i$ then, $D_r$ is an isometric isomorphism from $c_0(r)$ to $c_0$ since $x=\{x_k\} \in c_0(r)$ implies $D_rx = \{x_k r_k\} \in c_0$, and $\|x\|_r = \|D_rx\|_{\infty}.$ Hence $\|x\|_r$ defines a norm on $c_0(r)$ and $(c_0(r),\|x\|_r)$ is a Banach space under this norm. The dual of $c_0(r)$ is linearly isometric with the weighted sequence space $\ell_1(r^{-1})$ where $r^{-1} = \{\frac{1}{r_n}\}.$ Also it must be noted that if $\inf_k r_k > 0$ then $c_0(r) = c_0$ and the norms are equivalent. Therefore we are interested in the case $\inf_k r_k = 0.$

%
%
%
%
%
%
%
%

 We record the following lemmas which are useful in this sequel.

\begin{lemma} \cite{wilansky_sequence} \label{bounded_c0}
The matrix $A = (a_{nk})$ gives rise to a bounded linear operator $T \in \mathcal{B}(c_0)$ if and only if the following conditions hold,
\begin{enumerate}
\item[(i)] the rows of $A$ are in $\ell_1$ and their $\ell_1$ norms are bounded,
\item[(ii)] the columns of $A$ are in $c_0$.
\end{enumerate}
The operator norm of $T$ is given by the supremum of the $\ell_1$ norms of the rows.
\end{lemma}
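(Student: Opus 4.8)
The plan is to prove the two implications separately and then extract the norm formula from the estimates obtained along the way.

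\emph{Sufficiency.} Assume (i) and (ii) and put $M = \sup_n \sum_k |a_{nk}|$, which is finite by (i). For $x = \{x_k\} \in c_0$ the series defining $(Tx)_n = \sum_k a_{nk} x_k$ converges absolutely, and $|(Tx)_n| \le \|x\|_\infty \sum_k |a_{nk}| \le M\|x\|_\infty$, so $Tx \in \ell_\infty$ with $\|Tx\|_\infty \le M \|x\|_\infty$. The only point needing care is to show that $Tx$ in fact belongs to $c_0$; this is where (ii) enters. I would argue as follows: each $Te_j$ equals the $j$-th column of $A$, hence lies in $c_0$ by (ii), so $T$ maps the dense subspace of finitely supported sequences into $c_0$; since $c_0$ is closed in $\ell_\infty$ and $T$ is bounded as a map into $\ell_\infty$, a routine approximation argument gives $T(c_0) \subseteq c_0$. (Alternatively, split $\sum_k a_{nk}x_k$ at an index $K$ beyond which $|x_k|$ is small and use that each finite partial column sum tends to $0$ to conclude $(Tx)_n \to 0$.) This yields $T \in \mathcal{B}(c_0)$ with $\|T\| \le M$.

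\emph{Necessity.} Conversely, suppose the matrix $A$ induces a bounded operator $T$ on $c_0$. Then (ii) is immediate, since $Te_j$ is the $j$-th column of $A$ and $e_j \in c_0$ forces $Te_j \in c_0$. For (i), fix a row index $n$; for each $N$ consider the finitely supported vector $x^{(N)}$ with $k$-th entry $\overline{\operatorname{sgn}(a_{nk})}$ for $k \le N$ and $0$ otherwise, which has $\|x^{(N)}\|_\infty \le 1$. Then $\sum_{k \le N}|a_{nk}| = (Tx^{(N)})_n \le \|Tx^{(N)}\|_\infty \le \|T\|$, and letting $N \to \infty$ shows the $n$-th row lies in $\ell_1$ with $\sum_k |a_{nk}| \le \|T\|$, the bound being independent of $n$. (Equivalently, one may note that $x \mapsto (Tx)_n$ is a bounded linear functional on $c_0$ and invoke $c_0^* \cong \ell_1$ to represent it by the $n$-th row.)

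Finally, combining $\|T\| \le M$ from the first part with $M = \sup_n \sum_k |a_{nk}| \le \|T\|$ from the second gives $\|T\| = \sup_n \sum_k |a_{nk}|$, the claimed formula for the operator norm. Everything here is elementary; the only step that is not completely mechanical is checking $T(c_0) \subseteq c_0$ in the sufficiency direction, and that is precisely what condition (ii) is designed to guarantee.
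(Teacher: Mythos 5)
Your proof is correct and is the standard argument for this classical characterization (the paper itself gives no proof, citing Wilansky's book instead): testing against truncated sign vectors for the row condition, against the basis vectors $e_j$ for the column condition, and using density of finitely supported sequences together with closedness of $c_0$ in $\ell_\infty$ for sufficiency. Nothing further is needed.
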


\begin{lemma} \cite[Theorem 1]{icmc} \label{icmc}
For two weight vectors $r=\{r_n\}$, $s=\{s_n\}$ and an infinite matrix $A=(a_{nk})$, the matrix $A\in \mathcal{B}(c_0(r),c_0(s))$ if and only if the following conditions hold,
\begin{enumerate}
\item[(i)]$\sup\limits_{n\in\mathbb{N}} s_n\sum\limits_{k=1} ^{\infty} |\frac{a_{nk}}{r_k}|<\infty,$
\item[(ii)]$\lim\limits_{n\to\infty} s_ka_{nk}=0\ \forall \ k\in\mathbb{N}.$
\end{enumerate}
Then the norm of $A$ is given by $\|A\|_{r,s}=\sup\limits_{n\in\mathbb{N}} s_n\sum\limits_{k=1} ^{\infty} |\frac{a_{nk}}{r_k}|$.
\end{lemma}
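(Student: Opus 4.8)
The plan is to reduce the statement to Lemma~\ref{bounded_c0} by conjugating with the diagonal isometric isomorphisms introduced above. Recall that $D_r\colon c_0(r)\to c_0$, $\{x_k\}\mapsto\{r_kx_k\}$, is an isometric isomorphism, and likewise $D_s\colon c_0(s)\to c_0$. Consequently $A\in\mathcal{B}(c_0(r),c_0(s))$ if and only if the conjugated operator $\widetilde{A}:=D_s\,A\,D_r^{-1}$ belongs to $\mathcal{B}(c_0)$, and in that case $\|A\|_{r,s}=\|\widetilde{A}\|$, since conjugation by surjective isometries preserves operator norms. So the whole lemma will follow once we identify $\widetilde{A}$ as a matrix operator on $c_0$ and apply Lemma~\ref{bounded_c0} to it.

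First I would compute the matrix of $\widetilde{A}$. Since $D_r^{-1}$ is the diagonal matrix with entries $\{1/r_k\}$ and $D_s$ the diagonal matrix with entries $\{s_n\}$, the $(n,k)$ entry of $\widetilde{A}$ is $s_n a_{nk}/r_k$; because the outer factors are diagonal, forming this product raises no convergence issue beyond the one already implicit in letting $A$ act. Next I would simply transcribe the two conditions of Lemma~\ref{bounded_c0} applied to $\widetilde{A}$. Condition (i) there — rows of $\widetilde{A}$ in $\ell_1$ with uniformly bounded $\ell_1$ norms — reads $\sup_{n}\sum_{k=1}^\infty\left|\frac{s_n a_{nk}}{r_k}\right|=\sup_{n} s_n\sum_{k=1}^\infty\left|\frac{a_{nk}}{r_k}\right|<\infty$, which is exactly condition (i). Condition (ii) there — columns of $\widetilde{A}$ in $c_0$ — reads $\lim_{n\to\infty}\frac{s_n a_{nk}}{r_k}=0$ for each fixed $k$, and since $r_k$ is a fixed positive scalar this is equivalent to $\lim_{n\to\infty}s_n a_{nk}=0$, i.e. condition (ii). Finally, the norm clause of Lemma~\ref{bounded_c0} gives $\|\widetilde{A}\|=\sup_{n}\sum_{k=1}^\infty\left|\frac{s_n a_{nk}}{r_k}\right|=\sup_{n} s_n\sum_{k=1}^\infty\left|\frac{a_{nk}}{r_k}\right|$, and combining with $\|A\|_{r,s}=\|\widetilde{A}\|$ yields the stated formula.

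The argument is essentially bookkeeping, so there is no deep obstacle; the only point deserving a line of care is the verification that $\widetilde{A}=D_sAD_r^{-1}$ is genuinely represented, as an operator on $c_0$, by the matrix $(s_n a_{nk}/r_k)$ — that is, that evaluating the composition on a sequence agrees entrywise with multiplying by that matrix. This holds because the scalar diagonal factors $s_n$ and $1/r_k$ can be pulled through the row sums, which are absolutely convergent once condition (i) is assumed; in the ``only if'' direction one first obtains (i) from boundedness of $\widetilde{A}$ via Lemma~\ref{bounded_c0} and then this interchange is justified a posteriori. Everything else is a direct translation between the $c_0$ picture and the weighted picture.
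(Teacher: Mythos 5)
Your proposal is correct. Note, though, that the paper itself offers no proof of this lemma: it is imported verbatim as a citation of \cite[Theorem 1]{icmc}, so there is no internal argument to compare against. Your reduction --- conjugating by the diagonal isometric isomorphisms $D_r$ and $D_s$ to pass to $\widetilde{A}=D_sAD_r^{-1}$ with matrix $(s_na_{nk}/r_k)$ and then transcribing Lemma~\ref{bounded_c0} --- is exactly the standard way this transfer result is obtained, and all the steps (norm preservation under conjugation by surjective isometries, the identification of the matrix of $\widetilde{A}$, and the a posteriori justification of the row-sum interchange) are sound.

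One small point worth flagging: your derivation yields the column condition $\lim_{n\to\infty}s_na_{nk}=0$ for each fixed $k$, whereas the lemma as printed reads $\lim_{n\to\infty}s_ka_{nk}=0$, which (since $s_k>0$ is fixed) would say $a_{nk}\to 0$. These are not equivalent in general when $\inf_n s_n=0$. The subscript in the printed statement is evidently a typo for $s_n$: the paper's own application in Theorem~\ref{bounded} extracts precisely the condition $\lim_{n\to\infty}a_ns_n=0$, which is the column condition for the weight $s_n$, not $s_k$. So your version is the correct one and is the one the paper actually uses.
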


\begin{lemma} \cite[p. 59]{goldberg} \label{denserange}
The bounded linear operator $T : X \rightarrow Y$ has dense range if and only if $T^*$ is one to one.
\end{lemma}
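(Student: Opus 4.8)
The plan is to prove both directions of the equivalence using nothing more than the defining identity of the Banach adjoint, $(T^{*}\phi)(x)=\phi(Tx)$ for $\phi\in Y^{*}$ and $x\in X$, together with the Hahn--Banach theorem; this is the classical characterization recorded on p.~59 of \cite{goldberg}, so the proof is essentially bookkeeping around those two ingredients.

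For the forward implication, suppose $T$ has dense range, i.e.\ $\overline{R(T)}=Y$, and take any $\phi\in Y^{*}$ with $T^{*}\phi=0$. Then $\phi(Tx)=(T^{*}\phi)(x)=0$ for every $x\in X$, so $\phi$ annihilates $R(T)$; by continuity of $\phi$ it annihilates $\overline{R(T)}=Y$ as well, which forces $\phi=0$. Hence $T^{*}$ is one to one.

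For the reverse implication I would argue by contraposition. Assume $\overline{R(T)}\neq Y$ and choose $y_{0}\in Y\setminus\overline{R(T)}$. Since $\overline{R(T)}$ is a proper closed subspace of the Banach space $Y$, the Hahn--Banach theorem (in separation form, or equivalently by extending a suitable functional from the quotient $Y/\overline{R(T)}$) produces a $\phi\in Y^{*}$ with $\phi\equiv 0$ on $\overline{R(T)}$ but $\phi(y_{0})\neq 0$; in particular $\phi\neq 0$. For every $x\in X$ we have $Tx\in R(T)\subseteq\overline{R(T)}$, so $(T^{*}\phi)(x)=\phi(Tx)=0$, i.e.\ $T^{*}\phi=0$ while $\phi\neq 0$, contradicting injectivity of $T^{*}$. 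Therefore $\overline{R(T)}=Y$. The only nontrivial step is the appeal to Hahn--Banach that manufactures a nonzero functional killing a proper closed subspace; everything else is the unwinding of definitions, and no hypothesis beyond the normed (indeed Banach, as already assumed) setting is required.
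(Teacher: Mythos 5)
Your proof is correct and is exactly the standard argument that the paper's cited source (Goldberg, p.~59) uses: the forward direction is immediate from continuity of functionals, and the reverse direction is the Hahn--Banach separation of a point from the proper closed subspace $\overline{R(T)}$. Nothing is missing; the paper itself states this lemma without proof, deferring to the reference.
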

%

\section{Boundedness and compactness of $R_a$}
Let $r = \{r_n\}$ and $s = \{s_n\}$ are two bounded, strictly positive weight vectors for the sequence spaces $c_0(r)$ and $c_0(s)$ respectively. For any operator $T:c_0(r)\to c_0(s)$ the operator norm is denoted by $\|T\|_{r,s}$ and if $T$ is defined from $c_0(s)$ to $c_0(s)$ then $\|T\|_{r,s}=\|T\|_{s}$. Following theorem is an immediate application of Lemma \ref{icmc}.

\begin{theorem} \label{bounded}
The Rhaly operator $R_a : c_0(r) \to c_0(s)$ is bounded if and only if
\[\left\lbrace s_n a_n \sum\limits_{k=1}^{n}\frac{1}{r_k}\right\rbrace_{n \in \mathbb{N}} \in \ell_{\infty}, \]
and in this case, $\norm{R_a}_{r,s} = \sup\limits_n s_n \mo{a_n} \sum\limits_{k=1}^{n}\frac{1}{r_k}.$
\end{theorem}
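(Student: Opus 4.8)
The plan is to apply Lemma~\ref{icmc} to the terraced matrix $R_a = (a_{nk})$, whose entries are $a_{nk} = a_n$ for $1 \le k \le n$ and $a_{nk} = 0$ for $k > n$. First I would substitute these entries into condition~(i) of Lemma~\ref{icmc}. Since each row of $R_a$ is finitely supported, $s_n \sum_{k=1}^{\infty} \mo{\frac{a_{nk}}{r_k}} = s_n \mo{a_n} \sum_{k=1}^{n} \frac{1}{r_k}$, so condition~(i) reads exactly $\sup_n s_n \mo{a_n} \sum_{k=1}^{n} \frac{1}{r_k} < \infty$, which is the $\ell_\infty$ membership asserted in the statement; and the norm formula $\norm{R_a}_{r,s} = \sup_n s_n \mo{a_n} \sum_{k=1}^{n} \frac{1}{r_k}$ is then read off directly from the norm formula in Lemma~\ref{icmc}.

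It remains to verify condition~(ii) of Lemma~\ref{icmc}. For each fixed column index $k$ it requires $s_n a_{nk} \to 0$ as $n \to \infty$, and since $a_{nk} = a_n$ for all $n \ge k$ this is the single requirement $s_n a_n \to 0$. The point is that this is automatically implied by condition~(i) once we use the hypothesis that $r$ is bounded: writing $M := \sup_n s_n \mo{a_n}\sum_{k=1}^{n}\frac{1}{r_k}$, we have $\sum_{k=1}^{n}\frac{1}{r_k} \ge \frac{n}{\sup_k r_k}$, and hence $s_n \mo{a_n} \le \frac{M \sup_k r_k}{n} \to 0$. Consequently, if the $\ell_\infty$ condition holds then both conditions of Lemma~\ref{icmc} hold and $R_a$ is bounded; conversely, if $R_a$ is bounded then condition~(i) of Lemma~\ref{icmc} holds, which is precisely the $\ell_\infty$ condition. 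This establishes the equivalence, and the norm formula has already been recorded.

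This is a direct verification, so I do not expect a real obstacle. The one step that needs a moment's care is the reduction of condition~(ii) to condition~(i): this is exactly where the standing assumption that the weight vectors are bounded and strictly positive is used, and without boundedness of $r$ the divergence of $\sum_{k=1}^{n} 1/r_k$ (and hence the claimed equivalence) could fail.
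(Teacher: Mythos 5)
Your proposal is correct and follows essentially the same route as the paper: apply Lemma~\ref{icmc}, observe that condition~(i) is exactly the stated $\ell_\infty$ condition (which also gives the norm formula), and show condition~(ii) is automatic because boundedness of $r$ forces $\sum_{k=1}^{n}\frac{1}{r_k}\ge \frac{n}{\sup_k r_k}$, hence $s_n\mo{a_n}\to 0$. This is precisely the argument in the paper's proof.
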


\begin{proof}
It follows from Lemma \ref{icmc} that the Rhaly operator $R_a$ is a bounded linear operator from $c_0(r)$ to $c_0(s)$ if and only if
\[\sup\limits_{n} s_n \mo{a_n} \sum\limits_{k=1}^{n}\frac{1}{r_k} < \infty \ \mbox{and} \ \lim_{n \rightarrow \infty} a_n s_n = 0.\]
If $\sup\limits_{n} s_n \mo{a_n} \sum\limits_{k=1}^{n}\frac{1}{r_k} < \infty$ then there exists a positive real number $M$ such that,
\begin{equation}\label{4.2.1}
 s_n \mo{a_n} \sum\limits_{k=1}^{n}\frac{1}{r_k} < M \ \ \mbox{for all} \ \ n \in \mathbb{N}.
\end{equation}
Since $r=\{r_k\}$ is a bounded sequence of strictly positive real numbers, there exists a real number $M_1 > 0$ such that $\forall n \in \mathbb{N},$
\[\sum\limits_{k=1}^{n}\frac{1}{r_k} >n \frac{1}{M_1}. \]
Finally from \eqref{4.2.1} it follows that
\[0 < s_n \mo{a_n}< \frac{MM_1}{n}\ \ \mbox{for all} \ \ n \in \mathbb{N}, \]
and consequently $\lim_{n \rightarrow \infty} a_n s_n = 0.$ Hence $\sup_{n} s_n \mo{a_n} \sum\limits_{k=1}^{n}\frac{1}{r_k} < \infty$ implies $\lim_{n \rightarrow \infty} a_n s_n =0.$ Also the expression of $\norm{R_a}_{r,s}$ is a direct consequence of Lemma \ref{icmc}.
\end{proof}

\begin{theorem} \label{compact}
The Rhaly operator $R_a : c_0(r) \to c_0(s)$ is compact if and only if
\[\left\lbrace s_n a_n \sum\limits_{k=1}^{n}\frac{1}{r_k}\right\rbrace_{n \in \mathbb{N}} \in c_0. \]
\end{theorem}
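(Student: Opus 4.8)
The plan is to reduce compactness of $R_a$ to relative compactness of the image of the closed unit ball, and then to invoke the classical description of relatively compact subsets of $c_0$, pulled back through the isometry $D_s : c_0(s) \to c_0$. Recall that a bounded set $K \subseteq c_0$ is relatively compact if and only if it is uniformly null at infinity, that is, $\lim_{N\to\infty}\sup_{z\in K}\sup_{n>N}\mo{z_n}=0$; transporting this through $D_s$, a bounded set $K\subseteq c_0(s)$ is relatively compact precisely when $\lim_{N\to\infty}\sup_{z\in K}\sup_{m>N}s_m\mo{z_m}=0$.

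First I would observe that in the ``if'' part the hypothesis $\{s_n a_n\sum_{k=1}^{n}\frac1{r_k}\}\in c_0$ forces this sequence into $\ell_\infty$, so $R_a\in\mathcal{B}(c_0(r),c_0(s))$ by Theorem \ref{bounded}; in the ``only if'' part boundedness of $R_a$ is automatic. Hence in either direction $R_a$ is bounded, and $R_a$ is compact if and only if $R_a(B)$ is relatively compact in $c_0(s)$, where $B$ denotes the closed unit ball of $c_0(r)$.

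The core of the argument is the identity
\[
\sup_{x\in B}\;\sup_{m>N}\;s_m\,\mo{(R_a x)_m}\;=\;\sup_{m>N}\;s_m\,\mo{a_m}\sum_{k=1}^{m}\frac1{r_k}.
\]
To prove it, interchange the two suprema (a supremum over a product set) and, for fixed $m$, maximise $\mo{(R_a x)_m}=\mo{a_m}\,\mo{\sum_{k=1}^{m}x_k}$ over $B$: the triangle inequality together with $\mo{x_k}r_k\le\|x\|_r$ gives the upper bound $\mo{a_m}\sum_{k=1}^{m}\frac1{r_k}$, while the finitely supported vector $x$ with $x_k=r_k^{-1}$ for $k\le m$ and $x_k=0$ for $k>m$ belongs to $B$ and attains it. Combining this identity with the relative-compactness criterion above yields
\[
R_a\ \text{compact}\ \iff\ \lim_{N\to\infty}\sup_{m>N}s_m\mo{a_m}\sum_{k=1}^{m}\frac1{r_k}=0\ \iff\ \left\lbrace s_n a_n\sum_{k=1}^{n}\frac1{r_k}\right\rbrace_{n\in\mathbb{N}}\in c_0,
\]
which is the assertion.

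I do not anticipate a serious obstacle; the delicate points are the correct transfer of the $c_0$ compactness criterion to the weighted space $c_0(s)$ via $D_s$, and the verification that the extremal vector above lies in $c_0(r)$ with $\|x\|_r=1$ (it is finitely supported, hence certainly in $c_0(r)$, and $r_k\mo{x_k}$ equals $1$ for $k\le m$ and $0$ otherwise). As an alternative route for the ``if'' direction, one may approximate $R_a$ in operator norm by the finite-rank truncations $R_a^{(N)}$ obtained by replacing the rows indexed $n>N$ by zero rows; applying Theorem \ref{bounded} to $R_a-R_a^{(N)}$ gives $\norm{R_a-R_a^{(N)}}_{r,s}=\sup_{n>N}s_n\mo{a_n}\sum_{k=1}^{n}\frac1{r_k}\to 0$, so $R_a$ is a norm limit of compact operators and hence compact.
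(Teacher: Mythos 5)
Your argument is correct, and its main route differs from the paper's in a way worth noting. The paper first conjugates by the diagonal isometries to reduce to an operator $T_{r,s}$ on $c_0$, then treats the two implications by different techniques: sufficiency via approximation by finite-rank truncations in operator norm (exactly your ``alternative route''), and necessity via the criterion that a relatively compact subset of $c_0$ is dominated by a single null sequence (cited from the literature), tested against the vectors with first $n$ coordinates equal to $1$ --- which, after the change of variable, are precisely your extremal vectors $x_k=r_k^{-1}$. Your primary argument instead uses the equivalent ``uniformly null at infinity'' characterization of relatively compact subsets of $c_0$, transported through $D_s$, together with the exact evaluation
\[
\sup_{x\in B}\sup_{m>N}s_m\left|(R_ax)_m\right|=\sup_{m>N}s_m\left|a_m\right|\sum_{k=1}^{m}\frac{1}{r_k},
\]
which is legitimate (the interchange of suprema is free, the upper bound is the triangle inequality with $|x_k|\le \|x\|_r/r_k$, and the finitely supported extremal vector lies in the unit ball of $c_0(r)$). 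This buys a symmetric, self-contained treatment in which both implications drop out of one identity, at the mild cost of having to know (or prove) the tail characterization of compactness in $c_0$ rather than citing the domination criterion; the paper's split treatment avoids computing the supremum exactly but needs the external reference for the necessity direction. Your preliminary observation that boundedness holds in both directions (so that compactness is equivalent to relative compactness of $R_a(B)$) is a point the paper glosses over and is correctly handled here.
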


\begin{proof} Let us consider the linear operator $T_{r,s}: \mathbb{C}^{\mathbb{N}} \rightarrow  \mathbb{C}^{\mathbb{N}}$ defined as
\[T_{r,s}(x) = \left\lbrace a_ns_n \sum\limits_{k=1}^{n} \frac{x_k}{r_k} \right\rbrace_{n \in \mathbb{N}}.\]
Then $D_s R_a = T_{r,s}D_r$ and consequently $R_a : c_0(r) \to c_0(s)$ is a compact map if and only if $T_{r,s} \in \mathcal{K}(c_0).$
Let $\left\lbrace a_n s_n \sum\limits_{k=1}^{n}\frac{1}{r_k}\right\rbrace_{n \in \mathbb{N}} \in c_0$. Define a sequence of operators $\{T_{r,s}^{(k)}\}_{k \in \mathbb{N}}$ where $T_{r,s}^{(k)}:c_0 \rightarrow c_0 $ and
\[T_{r,s}^{(k)}(x)=\left\lbrace a_1s_1\frac{x_1}{r_1}, a_2s_2\left(\frac{x_1}{r_1}+\frac{x_2}{r_2}\right), \cdots, a_ks_k \sum\limits_{i=1}^{k} \frac{x_i}{r_i},0,0, \cdots \right\rbrace.\]
For each $k\in \mathbb{N}$ the operator $T_{r,s}^{(k)}$ is a finite rank operator. Now
\[
\norm{(T_{r,s} - T_{r,s}^{(k)})x}_\infty = \sup\limits_{n>k} \mo{a_n}s_n \sum\limits_{i=1}^{n} \frac{x_i}{r_i} \leq \norm{x}_\infty  \sup\limits_{n>k} \mo{a_n}s_n \sum\limits_{i=1}^{n} \frac{1}{r_i}.
\]
Hence
\[\norm{T_{r,s}- T_{r,s}^{(k)}} \leq \sup\limits_{n>k} \mo{a_n}s_n \sum\limits_{i=1}^{n}\frac{1}{r_i}.\]
Since $a_ns_n \sum\limits_{i=1}^{n}\frac{1}{r_i} \rightarrow 0$ as $n \rightarrow \infty$, by taking $k \rightarrow \infty$ on both side of the above relation we get
\[\norm{T_{r,s} - T_{r,s}^{(k)}} \rightarrow 0. \]
Hence the sequence of operators $\{T_{r,s}^{(k)}\}$ converges to $T_{r,s}$ under the operator norm, and this implies $T_{r,s}\in \mathcal{K}(c_0)$.

\noindent For the converse part we adopt similar approach as in Proposition 2.2 \cite{cesaro_c0w}. Let $R_a:c_0(r) \rightarrow c_0(s)$ is a compact operator. This implies $T_{r,s} \in \mathcal{K}(c_0).$ This also implies $T_{r,s}(B_{c_0}[0,1])$ is a relatively compact set where $B_{c_0}[0,1]$ denotes the closed unit ball in $c_0.$ Therefore using the result of \cite[p. 15]{compact} there exists a sequence $y=\{y_n\} \in c_0$ such that
\[\mo{(T_{r,s}(x))_n} \leq \mo{y_n} \ \ \mbox{for all} \ \ n \in \mathbb{N}, \ x \in B_{c_0}[0,1].\]
Since $y \in c_0,$ for every $\epsilon > 0$ there exists a $n_0 \in \mathbb{N}$ such that,
\[\mo{(T_{r,s}(x))_n} < \epsilon \ \ \mbox{for all} \ \ n > n_0.\]
Let us consider for each $n \in \mathbb{N}$ the sequence $\{u^{(n)}\}$ with  $\{u_k^{(n)}\}_{k \in \mathbb{N}} \in B_{c_0}[0,1]$ such that for $k \in \{1,2, \cdots, n \},$ $u_k^{(n)} = 1$ and $0$ otherwise. Then
\[\mo{(T_{r,s}(u^{(n)}))_n} = s_n \mo{a_n} \sum\limits_{k=1}^{n}\frac{1}{r_k} < \epsilon \ \ \mbox{for all} \ \ n > n_0.\]
Hence $\left\lbrace a_n s_n \sum\limits_{k=1}^{n}\frac{1}{r_k}\right\rbrace_{n} \in c_0$ and this proves the theorem.
\end{proof}

\begin{remark}
Few important remarks on the above result are mentioned below.
\begin{enumerate}
\item[(a)] If $r_n = s_n = 1$ for all $n \in \mathbb{N}$ then both the weighted sequence spaces $c_0(r)$ and $c_0(s)$ reduce to the classical sequence space $c_0.$ In this setting, Theorem \ref{bounded} yields, the Rhaly operator $R_a:c_0 \rightarrow c_0$ is bounded if and only if $\{na_n\} \in \ell_\infty.$ This result is proved by Leibowitz \cite{leibowitz} in Proposition 3.2.

\item[(b)] Since $|a_n| s_n \sum_{k =1}^{n}\frac{1}{s_k} \geq |a_n|, \ n \in \mathbb{N},$ a necessary condition for $R_a \in \mathcal{B}(c_0(s))$ and $R_a \in \mathcal{K}(c_0(s))$ is that $a_n \in \ell_\infty$ and $a_n \in c_0$ respectively.
\end{enumerate}

\end{remark}

 Let $s=\{s_n\}$ be a decreasing, strictly positive sequence of real numbers. Then
\[s_n \mo{a_n} \sum\limits_{k =1}^{n}\frac{1}{s_k} \leq n\mo{a_n}.\]
Therefore $R_a \in \mathcal{B}(c_0(s))$ if $\{na_n\} \in \ell_\infty$ and $R_a \in \mathcal{K}(c_0(s))$ if $\{na_n\} \in c_0.$ Also let $\{e_n\}$ be a sequence in $c_0(r)$ such that the $n$-th entry is one and all other entries are zero. Then for $n \in \mathbb{N},$
\[\norm{R_a e_n}_s = \sup\limits_{k \geq n} s_k \mo{a_k} \geq a_n s_n = \mo{a_n} \norm{e_n}_s.\]
This gives the following corollary.
\begin{corollary}\label{cor2}
Let $s=\{s_n\}$ be a bounded, decreasing, strictly positive sequence of real numbers. Then $R_a \in \mathcal{B}(c_0(s))$ if $\{na_n\} \in \ell_\infty$ and $R_a \in \mathcal{K}(c_0(s))$ if $\{na_n\} \in c_0.$ Also \[\sup\limits_n \mo{a_n} \leq \norm{R_a}_s \leq \sup\limits_n n \mo{a_n}.\]
\end{corollary}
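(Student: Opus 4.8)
The plan is to specialize Theorem~\ref{bounded} and Theorem~\ref{compact} to the case $r=s$ and then exploit the monotonicity of the weight. Since $s=\{s_n\}$ is decreasing and strictly positive, for every $n$ and every $k\le n$ we have $s_k\ge s_n$, hence $\tfrac{1}{s_k}\le\tfrac{1}{s_n}$, which gives $\sum_{k=1}^{n}\tfrac{1}{s_k}\le\tfrac{n}{s_n}$ and therefore
\[
s_n \mo{a_n}\sum_{k=1}^{n}\frac{1}{s_k} \le n\mo{a_n}\quad\text{for all }n\in\mathbb{N}.
\]
Consequently, if $\{na_n\}\in\ell_\infty$ then $\{s_n a_n\sum_{k=1}^{n}\tfrac{1}{s_k}\}\in\ell_\infty$, so $R_a\in\mathcal{B}(c_0(s))$ by Theorem~\ref{bounded}; and if $\{na_n\}\in c_0$ then the same sequence lies in $c_0$, so $R_a\in\mathcal{K}(c_0(s))$ by Theorem~\ref{compact}. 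These are only sufficient conditions, because the comparison above runs in one direction only.

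For the norm estimates I would argue as follows. The upper bound is immediate: by the norm formula in Theorem~\ref{bounded},
\[
\norm{R_a}_s = \sup_n s_n\mo{a_n}\sum_{k=1}^{n}\frac{1}{s_k} \le \sup_n n\mo{a_n}.
\]
For the lower bound I would test $R_a$ on the canonical vectors $e_n\in c_0(s)$. Reading off the $n$-th column of the terraced matrix, $R_a e_n$ is the sequence whose $k$-th entry equals $a_k$ for $k\ge n$ and $0$ for $k<n$; hence $\norm{R_a e_n}_s=\sup_{k\ge n} s_k\mo{a_k}\ge s_n\mo{a_n}=\mo{a_n}\,\norm{e_n}_s$. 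Dividing by $\norm{e_n}_s=s_n>0$ and taking the supremum over $n$ yields $\norm{R_a}_s\ge\sup_n\mo{a_n}$, completing the chain of inequalities.

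There is essentially no hard step here: the statement is a direct corollary of the two preceding theorems together with the elementary monotonicity estimate, and the lower bound is a one-line test on basis vectors (which does not even use the monotonicity of $s$). The only point deserving care is to record that the bounds $\sup_n\mo{a_n}\le\norm{R_a}_s\le\sup_n n\mo{a_n}$ need not be attained and that the hypotheses on $\{na_n\}$ are sufficient but not necessary, precisely because the inequality $s_n\mo{a_n}\sum_{k=1}^{n}\tfrac{1}{s_k}\le n\mo{a_n}$ cannot be reversed in general.
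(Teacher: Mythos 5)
Your proof is correct and follows essentially the same route as the paper: the monotonicity estimate $\sum_{k=1}^{n}\frac{1}{s_k}\le\frac{n}{s_n}$ combined with Theorems \ref{bounded} and \ref{compact} for the sufficiency claims and the upper norm bound, and testing on the canonical vectors $e_n$ for the lower bound. Nothing to add.
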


The following example implies the above corollary is merely a sufficient condition.
\begin{example}
 Let for $n \in \mathbb{N},$ $a_n = \frac{1}{\log (n+1)}$ and $s_n = \frac{1}{2^n}.$ Then $s_n \sum_{k =1}^n \frac{1}{s_k} = \frac{2^{n+1} - 2}{2^n},$ $n \in \mathbb{N}.$ Hence
\[a_n s_n \sum\limits_{k =1}^n \frac{1}{s_k} = \frac{2^{n+1} - 2}{2^n \log(n+1)} \to 0 \mbox{ as } n \to \infty.\]
Here, $\{s_n\}$ is a bounded strictly decreasing sequence and $\{na_n\}$ is unbounded but $R_a \in \mathcal{K}(c_0(s)).$
\end{example}

\section{Spectral properties of $R_a$}

From now onwards, let $\{a_n\}$ is a sequence of positive real numbers such that $\lim_{n \to \infty} na_n $ exists finitely and equal to $\chi \neq 0$ and $S=\{a_n:n \in \mathbb{N}\}$. The following lemma (\cite[Lemma 2.7]{rhaly_bv0}, \cite[Lemma 7]{reade1985spectrum}) is useful in this sequel.

\begin{lemma}\label{eq1}
Let $\lambda \in \mathbb{C} \setminus S$ and $\alpha = \Re(\frac{1}{\lambda}).$ Then the following relation holds
\begin{equation*}
\prod\limits_{k = 1}^n \left| 1 - \frac{a_k}{\lambda} \right| \simeq \frac{1}{n^{\alpha \chi}},
\end{equation*}
where the notation $a_n \simeq b_n$ means the sequences $\left\lbrace \frac{a_n}{b_n} \right\rbrace$ and $\left\lbrace \frac{b_n}{a_n} \right\rbrace$ are bounded and for any $z \in \mathbb{C},$ $\Re(z)$ denotes the real part of $z.$
\end{lemma}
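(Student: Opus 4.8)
The plan is to estimate the product $P_n = \prod_{k=1}^n \left| 1 - \frac{a_k}{\lambda} \right|$ by taking logarithms and comparing the resulting sum with a harmonic sum. Since $\lambda \notin S$, none of the factors vanish, so $\log P_n = \sum_{k=1}^n \log\left| 1 - \frac{a_k}{\lambda} \right|$ is well-defined. The key asymptotic input is $a_k \sim \chi/k$ (from $\lim_{n} n a_n = \chi$), so $\frac{a_k}{\lambda} \to 0$ and, for large $k$, the factor $1 - \frac{a_k}{\lambda}$ is close to $1$. I would use the expansion $\log|1 - z| = \Re \log(1-z) = -\Re z + O(|z|^2)$ valid for small $|z|$, applied with $z = a_k/\lambda$. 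This gives
\[
\log\left| 1 - \frac{a_k}{\lambda} \right| = -\Re\left(\frac{a_k}{\lambda}\right) + O\!\left(\frac{1}{k^2}\right) = -\alpha\, a_k + O\!\left(\frac{1}{k^2}\right),
\]
using $\Re(a_k/\lambda) = a_k \Re(1/\lambda) = \alpha a_k$ since $a_k$ is real.

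Next I would sum from $k=1$ to $n$. The error terms $\sum_k O(1/k^2)$ converge, so they contribute a bounded quantity. For the main term, $\sum_{k=1}^n a_k$ compares to $\sum_{k=1}^n \chi/k = \chi \log n + O(1)$: writing $a_k = \chi/k + \varepsilon_k$ with $\varepsilon_k = o(1/k)$, one has $\sum_{k=1}^n \varepsilon_k = \sum_{k=1}^n (a_k - \chi/k)$; this partial sum is bounded \emph{provided} $\sum \varepsilon_k$ converges, which is where one must be a little careful — $\varepsilon_k = o(1/k)$ alone does not guarantee summability. The honest route is to keep the partial sums $\sum_{k=1}^n \varepsilon_k = \sum_{k=1}^n a_k - \chi H_n$ and argue directly, or to invoke the cited lemmas (\cite[Lemma 2.7]{rhaly_bv0}, \cite[Lemma 7]{reade1985spectrum}), where exactly this estimate is carried out under the standing hypothesis that $\lim n a_n = \chi$ exists finitely; that hypothesis is precisely what makes the Cesàro-type averaging work. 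Assembling the pieces,
\[
\log P_n = -\alpha \sum_{k=1}^n a_k + O(1) = -\alpha\chi \log n + O(1),
\]
hence $P_n = e^{O(1)} \cdot n^{-\alpha\chi}$, i.e. $P_n \simeq n^{-\alpha\chi}$, which is the claim. The boundedness of both $\{P_n n^{\alpha\chi}\}$ and its reciprocal follows because the $O(1)$ term is two-sided bounded.

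The main obstacle is controlling $\sum_{k=1}^n (a_k - \chi/k)$ — i.e. showing that replacing $a_k$ by its leading term $\chi/k$ inside the sum costs only a bounded error. This is not automatic from $n a_n \to \chi$; it is the technical heart of the cited lemmas and typically uses a summation-by-parts argument together with the fact that $na_n$ is not merely bounded but convergent. A secondary (minor) technical point is that the small-$z$ expansion of $\log|1-z|$ is only valid for $k$ beyond some threshold $k_0$ (where $|a_k/\lambda| < 1/2$, say); the finitely many initial factors $k \le k_0$ contribute a fixed nonzero constant to $P_n$, harmless for the $\simeq$ relation since $\lambda \notin S$ ensures none of them is zero. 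Since the statement explicitly cites \cite[Lemma 2.7]{rhaly_bv0} and \cite[Lemma 7]{reade1985spectrum}, the cleanest exposition is simply to note that the argument there applies verbatim once one observes that the hypotheses here ($a_n > 0$, $n a_n \to \chi \neq 0$, $\lambda \notin S$) match those settings.
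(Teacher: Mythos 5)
Your strategy --- take logarithms, linearize $\log\left|1-\frac{a_k}{\lambda}\right| = -\alpha a_k + O(1/k^2)$ for large $k$, and compare $\sum_{k\le n} a_k$ with $\chi\log n$ --- is exactly the route the paper itself follows for the companion Lemma \ref{lemmaeq1} (there written multiplicatively, via $e^x\ge 1+x$ and the integral bounds $\log n-\log(m+1)\le\sum_{k=m+1}^{n}\frac{1}{k}\le\log n-\log m$), and it is the route of the cited sources; the paper gives no independent proof of Lemma \ref{eq1}, which is simply quoted from \cite{rhaly_bv0} and \cite{reade1985spectrum}. So in approach you and the paper agree.

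However, the point you flag --- that one must show $\sum_{k=1}^{n}\left(a_k-\frac{\chi}{k}\right)=O(1)$ and that this does not follow from $na_n\to\chi$ --- is not a minor caveat but a genuine gap, and the repair you gesture at (summation by parts plus convergence of $na_n$) does not close it. Writing $a_k=\frac{\chi}{k}+\varepsilon_k$ with $k\varepsilon_k\to 0$, take $\varepsilon_k=\frac{1}{k\log(k+1)}$: then $a_k>0$ and $ka_k\to\chi$, yet $\sum_{k=1}^{n}\varepsilon_k\sim\log\log n$, so your own computation gives $\prod_{k=1}^{n}\left|1-\frac{a_k}{\lambda}\right|\simeq n^{-\alpha\chi}(\log n)^{-\alpha}$, which is \emph{not} $\simeq n^{-\alpha\chi}$ when $\alpha\neq 0$. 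Hence the lemma is false under the standing hypothesis $\lim_n na_n=\chi$ alone, and "the cited argument applies verbatim" cannot be the whole story: either the sources carry an extra regularity assumption (e.g. $\sum_n\left|a_n-\frac{\chi}{n}\right|<\infty$, which holds trivially for $a_n=\frac{1}{n}$ in Reade's setting), or such an assumption must be imported here. Be aware that the paper's own proof of Lemma \ref{lemmaeq1} makes the same silent substitution of $\frac{\chi}{k}$ for $a_k$ inside the exponentiated sum, justified only by $a_k\simeq\frac{\chi}{k}$ (a two-sided ratio bound, which only yields exponents $c_1\alpha\chi$ and $c_2\alpha\chi$ rather than $\alpha\chi$), so the concern you raise applies to the paper as well.
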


Using the similar concept as \cite[Lemma 7]{reade1985spectrum}, we prove a slightly modified result.

\begin{lemma} \label{lemmaeq1}
For a fixed $m \in \mathbb{N},$ let $\lambda \in \mathbb{C} \setminus \{a_k : k = m+1, m+2, \cdots\}$ and $\alpha = \Re(\frac{1}{\lambda}).$ Then the following relation holds
\begin{equation*}
\prod\limits_{k = m+1}^n \left| 1 - \frac{a_k}{\lambda} \right| \simeq \frac{1}{n^{\alpha \chi}}.
\end{equation*}
\end{lemma}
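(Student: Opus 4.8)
The plan is to reduce Lemma \ref{lemmaeq1} to Lemma \ref{eq1} by a finite-factor argument. Observe that for the fixed $m$, the product $\prod_{k=1}^{n} \left|1 - \frac{a_k}{\lambda}\right|$ differs from $\prod_{k=m+1}^{n} \left|1 - \frac{a_k}{\lambda}\right|$ only by the finite factor $P_m := \prod_{k=1}^{m} \left|1 - \frac{a_k}{\lambda}\right|$, which is a positive constant independent of $n$ (it may be zero only if $\lambda \in \{a_1, \dots, a_m\}$, which is precisely the case not excluded by the hypothesis and which must be handled separately). So the first step is to dispose of the trivial case $P_m \neq 0$: then for all $n > m$,
\[
\prod_{k=m+1}^{n} \left|1 - \frac{a_k}{\lambda}\right| = \frac{1}{P_m} \prod_{k=1}^{n} \left|1 - \frac{a_k}{\lambda}\right|,
\]
and since multiplying a sequence by a fixed nonzero constant preserves the $\simeq$ relation, Lemma \ref{eq1} (applicable provided $\lambda \notin S$, i.e.\ $\lambda \notin \{a_k : k \geq 1\}$) immediately gives $\prod_{k=m+1}^{n}\left|1 - \frac{a_k}{\lambda}\right| \simeq \frac{1}{n^{\alpha\chi}}$.

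The genuine obstacle is the case where $\lambda = a_j$ for some $j \in \{1, \dots, m\}$, so that Lemma \ref{eq1} does not apply directly because $\lambda \in S$, yet the hypothesis of Lemma \ref{lemmaeq1} still permits this $\lambda$. Here I would not invoke Lemma \ref{eq1} as a black box but instead re-run its proof (the Reade-type estimate from \cite[Lemma 7]{reade1985spectrum}) with the product starting at $k = m+1$. The point is that the asymptotic behaviour of $\prod_{k=m+1}^{n}\left|1 - \frac{a_k}{\lambda}\right|$ is governed entirely by the tail: writing $\log \prod_{k=m+1}^{n}\left|1 - \frac{a_k}{\lambda}\right| = \sum_{k=m+1}^{n} \log\left|1 - \frac{a_k}{\lambda}\right|$ and using $a_k = \frac{\chi}{k} + o\!\left(\frac1k\right)$ (from $\lim_k ka_k = \chi$), one has $\log\left|1 - \frac{a_k}{\lambda}\right| = \Re \log\left(1 - \frac{a_k}{\lambda}\right) = -\Re\!\left(\frac{a_k}{\lambda}\right) + O(a_k^2) = -\frac{\alpha\chi}{k} + O\!\left(\frac1{k^2}\right)$ for $k$ large (in particular for $k > m$, where $a_k$ is bounded away from $\lambda = a_j$ since $a_k \to 0 \neq a_j$ when $\lambda \neq 0$, and $a_k \neq \lambda$ for only finitely many $k$ anyway). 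Summing and comparing with $\sum_{k=m+1}^{n}\frac1k = \log n + O(1)$ yields $\sum_{k=m+1}^{n}\log\left|1-\frac{a_k}{\lambda}\right| = -\alpha\chi\log n + O(1)$, i.e.\ the product is $\simeq n^{-\alpha\chi}$. The only care needed is that the $O(1)$ absorbs the finitely many initial terms (including any $k$ with $a_k$ close to or equal to $\lambda$, which contribute a bounded amount as long as none of the factors with $k > m$ vanishes — guaranteed by the hypothesis $\lambda \notin \{a_k : k > m\}$).

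In practice the cleanest writeup is: first state that if $\lambda \notin S$ the result is Lemma \ref{eq1} scaled by the constant $P_m^{-1}$; then, for the remaining case $\lambda = a_j$ with $j \leq m$, note that $\lambda$ is still bounded away from the tail $\{a_k : k > m\}$ (for $\lambda \neq 0$ because $a_k \to 0$; and $\lambda = 0$ is excluded since $0 \notin \{a_k\}$ as the $a_k$ are positive), so one picks an even larger index $m' \geq m$ with $\lambda \neq a_k$ for all $k > m'$ — but actually $m' = m$ already works by hypothesis — and applies the tail estimate above verbatim. I expect the main obstacle to be purely expository: making it transparent that the $\simeq$ relation is insensitive both to a fixed multiplicative constant and to changing where the product starts, so that the lemma is ``morally the same'' as Lemma \ref{eq1} with essentially no new analytic content beyond re-indexing.
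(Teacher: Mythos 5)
Your proposal is correct, and its analytic core --- re-running the Reade-type estimate $\log\left|1-\frac{a_k}{\lambda}\right| = -\alpha a_k + O(a_k^2)$ on the tail product $\prod_{k=m+1}^{n}$ --- is exactly what the paper does; the paper simply applies this tail estimate uniformly to every $\lambda\notin\{a_k: k>m\}$, so your preliminary case split (dividing by the constant $P_m$ and invoking Lemma~\ref{eq1} when $\lambda\notin S$, reserving the direct estimate for $\lambda\in\{a_1,\dots,a_m\}$) is sound but not needed. One caveat, which applies equally to the paper's own writeup and to the cited Lemma~7 of Reade: the substitution $-\alpha a_k = -\frac{\alpha\chi}{k}+O\left(\frac{1}{k^2}\right)$ uses more than $ka_k\to\chi$ actually provides (that hypothesis only gives an $o\left(\frac{1}{k}\right)$ error, whose sum need not be bounded), so this step is a shared imprecision rather than a gap specific to your argument.
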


\begin{proof} 
Let $\frac{1}{\lambda} = \alpha + i \beta$ where $\alpha,$ $\beta \in \mathbb{R}$ and $\lim_{n \to \infty} na_n = \chi$ implies $a_n \simeq \frac{\chi}{n}.$ Imitating the calculations of Lemma 7 in \cite{reade1985spectrum} and using the fact that $e^x \geq 1+x$ for all $x \in \mathbb{R},$ we have the following relation
\begin{equation} \label{eql1}
\prod\limits_{k = m+1}^n \left| 1 - \frac{a_k}{\lambda} \right| \leq \mathcal{O}(1) \exp \sum\limits_{k = m+1}^n \left(-  \frac{\chi}{k} \alpha + \frac{(\alpha^2 + \beta^2)}{2} \frac{\chi^2}{k^2} \right).
\end{equation}
Now we use the following inequality
\[\sum\limits_{k = m+1}^n \frac{1}{k} \geq \int\limits_m^n \frac{1}{x+1} dx = \log (n+1) - \log (m+1) \geq \log n - \log (m+1).\] 
Therefore from (\ref{eql1}) we deduce that
\[ \prod\limits_{k = m+1}^n \left| 1 - \frac{a_k}{\lambda} \right| \leq \frac{\mathcal{O}(1) }{n^{\chi \alpha}}.\]
Also note that
\[\sum\limits_{k = m+1}^n \frac{1}{k} \leq \int\limits_{m-1}^{n-1} \frac{1}{x+1} dx = \log n - \log m.\]
Using the above inequality, in a similar way it can be proved that
\[\prod\limits_{k = m+1}^n \left| 1 - \frac{a_k}{\lambda} \right|^{-1} \leq \mathcal{O}(1) n^{\chi \alpha}.\]
This proves the result.
\end{proof}


Next we derive the point spectrum of $R_a$.

\begin{theorem} \label{point}
Let $s = \{s_n\}$ be a bounded, strictly positive sequence such that $R_a \in \mathcal{B}(c_0(s)).$ Then 
\[\sigma_p(R_a, c_0(s)) = \{ \lambda \in S : \lim\limits_{n \to \infty} a_n s_n n^{{\alpha \chi}} = 0 \}\]
where $\alpha = \Re({\frac{1}{\lambda}}) = \frac{1}{\lambda}.$
\end{theorem}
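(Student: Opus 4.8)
The plan is to compute the eigenvectors of $R_a$ directly from the terraced matrix structure, and then decide for which eigenvalues the resulting eigenvector actually lies in $c_0(s)$. Suppose $\lambda \neq 0$ and $R_a x = \lambda x$ with $x = \{x_n\} \neq 0$. Reading off the $n$-th row of $R_a$, the eigenequation is $a_n(x_1 + x_2 + \cdots + x_n) = \lambda x_n$ for every $n \in \mathbb{N}$. Writing $S_n = x_1 + \cdots + x_n$, this says $a_n S_n = \lambda(S_n - S_{n-1})$ (with $S_0 = 0$), i.e. $S_{n-1} = \bigl(1 - \tfrac{a_n}{\lambda}\bigr) S_n$, which forces $S_n = S_1 \prod_{k=2}^{n}\bigl(1 - \tfrac{a_k}{\lambda}\bigr)^{-1}$ provided none of the factors vanish; a factor $1 - a_k/\lambda$ vanishes exactly when $\lambda = a_k \in S$, and one checks this forces $\lambda \in S$ for any nonzero eigenvalue. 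So the first step is: for $\lambda \notin S \cup \{0\}$ there is no eigenvector (I should check the $n=1$ equation $a_1 x_1 = \lambda x_1$ separately, which gives $x_1 = 0$, and then induction gives $x \equiv 0$), and $\lambda = 0$ is not an eigenvalue because $a_n > 0$ makes $R_a$ injective (from $a_1 x_1 = 0$ upward). Hence $\sigma_p(R_a, c_0(s)) \subseteq S$.

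The second step is to fix $\lambda = a_m \in S$ and produce the candidate eigenvector. With $\lambda = a_m$, the $m$-th equation reads $a_m S_m = a_m(S_m - S_{m-1})$, so $S_{m-1} = 0$; and the equations for $n < m$ force $S_1 = \cdots = S_{m-1} = 0$ (equivalently $x_1 = \cdots = x_{m-1} = 0$), while $S_m = x_m$ is a free nonzero parameter. For $n > m$ the recursion $S_{n-1} = (1 - a_n/\lambda)S_n$ has all nonzero factors (since $a_n \neq a_m$ for $n > m$ by the hypothesis that $na_n \to \chi \neq 0$, so the $a_n$ are eventually distinct from $a_m$ — this needs a small argument, or one restricts to $m$ large; actually the cleanest route is to note $\lambda = a_m \notin \{a_k : k > m_0\}$ for suitable $m_0$ and apply Lemma \ref{lemmaeq1} with that index), giving
\[
x_n = S_n - S_{n-1} = S_n \frac{a_n}{\lambda} = \frac{a_n}{\lambda}\, S_m \prod_{k=m+1}^{n}\Bigl(1 - \frac{a_k}{\lambda}\Bigr)^{-1}, \qquad n > m.
\]
Thus the eigenvector is essentially unique up to scaling, and membership in $c_0(s)$ amounts to $s_n x_n \to 0$.

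The third and final step is the asymptotic analysis: since $\alpha = \Re(1/\lambda) = 1/\lambda$ (as $\lambda = a_m$ is real and positive), Lemma \ref{lemmaeq1} gives $\prod_{k=m+1}^{n}|1 - a_k/\lambda| \simeq n^{-\alpha\chi}$, hence $\prod_{k=m+1}^{n}|1 - a_k/\lambda|^{-1} \simeq n^{\alpha\chi}$. Combining with $|x_n| = \frac{a_n}{\lambda}|S_m|\prod_{k=m+1}^{n}|1 - a_k/\lambda|^{-1} \simeq a_n n^{\alpha\chi}$ (absorbing the constant $|S_m|/\lambda$), we get $s_n|x_n| \simeq a_n s_n n^{\alpha\chi}$. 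Therefore $x \in c_0(s)$ if and only if $a_n s_n n^{\alpha\chi} \to 0$, which is exactly the stated condition; and when it fails, the unique candidate eigenvector is not in the space, so $a_m \notin \sigma_p$. This yields $\sigma_p(R_a, c_0(s)) = \{\lambda \in S : \lim_{n\to\infty} a_n s_n n^{\alpha\chi} = 0\}$. The main obstacle is bookkeeping around the vanishing factors $1 - a_k/\lambda$ when $\lambda = a_m$ coincides with $a_k$ for some $k > m$: one must either invoke that $\{a_n\}$ is eventually injective (a consequence of $na_n \to \chi \neq 0$, so only finitely many $a_k$ equal $a_m$) or phrase everything in terms of a tail index as in Lemma \ref{lemmaeq1}; once that is handled, the rest is the routine recursion plus the $\simeq$-estimate.
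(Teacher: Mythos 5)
Your proposal is correct and follows essentially the same route as the paper: reduce the eigenequation to the recursion giving $x_n = \frac{a_n}{a_m}\,x_m \prod_{k=m+1}^{n}\bigl(1-\frac{a_k}{\lambda}\bigr)^{-1}$ for the first nonzero index $m$, then apply Lemma \ref{lemmaeq1} to see that $s_n|x_n| \simeq a_n s_n n^{\alpha\chi}$, so membership in $c_0(s)$ is exactly the stated condition. Your explicit handling of possible repeated values $a_k=\lambda$ for $k>m$ is a point the paper glosses over, but it does not change the argument.
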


\begin{proof} Let us consider $R_ax=\lambda x$ for some $\lambda \in \mathbb{C}$ and non-zero $x.$ This gives the system of equations
\begin{equation} \label{eqn2}
\left. \begin{aligned}
	a_1x_1 &=& \lambda x_1\\
	a_2(x_1+x_2) &=& \lambda x_2\\
	&\vdots &\\
	a_k(x_1+x_2+\cdots+x_k) &=& \lambda x_k\\
	& \vdots &
	\end{aligned}
	\right\rbrace.
	\end{equation}
	If $\lambda \notin S$ then from the above equations it follows that $x_k = 0$ for all $k \in \mathbb{N}.$ Hence $\sigma_p(R_a, c_0(s)) \subseteq S.$. Any element $\lambda$ of $S$ belongs to $\sigma_p(R_a, c_0(s))$ if and only if $R_a x = \lambda x$ holds for some non-zero $x = \{x_n\}  \in c_0(s).$ Simplifying $R_a x = \lambda x$ we get,
\begin{equation} \label{eq12}
x_n = \prod\limits_{j=m+1}^{n} \left(  \frac{\lambda a_{j-1}^{-1}}{\lambda a_j^{-1} - 1} \right)x_m, \ n = m+1, m+2, \cdots,
\end{equation}
where $x_m$ is the first non-zero entry of $\{x_n\}.$ Now we rearrange the right hand side of the above equation as follows;
\begin{eqnarray*}
x_n = \prod\limits_{j=m+1}^{n} \left(  \frac{\lambda a_{j-1}^{-1}}{\lambda a_j^{-1} - 1} \right)x_m = \prod\limits_{j=m+1}^{n} \left(  \frac{a_j a_{j-1}^{-1}}{1 - \frac{a_j}{\lambda}} \right)x_m &=& \frac{\prod\limits_{j=m+1}^{n} a_j a_{j-1}^{-1}}{\prod\limits_{j=m+1}^{n} \left(1 - \frac{a_j}{\lambda}\right)}x_m \\
&=& \frac{\frac{a_n}{a_m}}{\prod\limits_{j=m+1}^{n} \left(1 - \frac{a_j}{\lambda}\right)}x_m.
\end{eqnarray*}
From Lemma \ref{lemmaeq1} it can be deduced that $\lambda \in \sigma_p(R_a, c_0(s))$ if and only if
\[\lim\limits_{n \to \infty} a_n s_n n^{\alpha \chi} = 0,\]
where $\alpha = \Re({\frac{1}{\lambda}}) = \frac{1}{\lambda}.$ 

\end{proof}

%

\begin{remark}
Here we mention few important observations related to the point spectrum.
\begin{enumerate}

\item[(i)] Since the weight vector $s = \{s_n\}$ is a bounded sequence, and $a_n \simeq \frac{\chi}{n},$ for any $\lambda \in S$ satisfies $\lambda > \chi$ we have 
\[a_n n^{\alpha \chi} \simeq \chi n^{\alpha \chi - 1}\]
where $\alpha = \Re({\frac{1}{\lambda}}) = \frac{1}{\lambda}.$ This proves that $a_ns_n n^{\alpha \chi} \to 0$ as $n \to \infty$ and consequently $\lambda \in \sigma_p(R_a, c_0(s)).$ In this regard we have the following inclusion relation
\[\{\lambda \in S: \lambda > \chi\} \subseteq \sigma_p(R_a, c_0(s)).\]

\item[(ii)] Let $\lambda \in \sigma_p(R_a, c_0(s)).$ Then for all $\mu \in S$ such that $\lambda \leq \mu$ the following relation holds for all $n \in \mathbb{N}$
\[a_n s_n n^{\frac{\chi}{\mu}} \leq a_n s_n n^{\frac{\chi}{\lambda}}.\]
This shows that $\mu \in \sigma_p(R_a, c_0(s))$ for all $\lambda \leq \mu.$
\item[(iii)] If $a_n = \frac{1}{n}$ and $s_n = 1$ for all $n \in \mathbb{N}$ then $\chi = 1$. In this case $a_n s_n n^{\frac{\chi}{a_k}}$ does not converge to zero for any $k \in \mathbb{N}$ and hence $\sigma_p(R_a, c_0(s)) = \emptyset.$ This reflects the case of the point spectrum of the Ces\`aro matrix over $c_0$ space which is obtained by Reade \cite[Theorem 1]{reade1985spectrum}.
\end{enumerate}
\end{remark}

\begin{theorem}\label{thm4}
Let $s = \{s_n\}$ be a bounded strictly positive sequence such that $R_a \in \mathcal{B}(c_0(s))$. Then the following statements hold
\begin{enumerate}
\item[(i)] $0 \notin \sigma_p(R_a^*, c_0(s)^*),$
\item[(ii)] $S \subseteq \sigma_p(R_a^*, c_0(s)^*),$
\item[(iii)] Let $\lambda \in \mathbb{C}\setminus \overline{S}.$ Then $\lambda \in \sigma_p(R_a^*, c_0(s)^*)$ if and only if
\[\sum\limits_{n = 1}^\infty \frac{1}{s_n n^{\alpha \chi}} < \infty,\]
where $\alpha = \Re(\frac{1}{\lambda}).$
\end{enumerate}
\end{theorem}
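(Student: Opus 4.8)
The plan is to analyze the eigenvalue equation $R_a^* \phi = \lambda \phi$ in the dual space $c_0(s)^* \cong \ell_1(s^{-1})$, where we identify a functional $\phi$ with a sequence $y = \{y_n\}$ satisfying $\sum_n |y_n| s_n^{-1} < \infty$. Since $R_a$ is the lower-triangular terraced matrix, $R_a^*$ acts as the upper-triangular matrix with $(R_a^*)_{nk} = a_k$ for $k \ge n$ and $0$ otherwise; thus $R_a^* y = \lambda y$ gives the system $\sum_{k \ge n} a_k y_k = \lambda y_n$ for all $n$. Subtracting consecutive equations yields the two-term recurrence $\lambda y_n - \lambda y_{n+1} = a_n y_n$, i.e. $y_{n+1} = \bigl(1 - \tfrac{a_n}{\lambda}\bigr) y_n$ for $\lambda \ne 0$, which telescopes to $y_{n+1} = y_1 \prod_{k=1}^{n}\bigl(1 - \tfrac{a_k}{\lambda}\bigr)$. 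So the whole game is to decide, for each $\lambda$, whether this (essentially unique up to scalar) solution is a nonzero element of $\ell_1(s^{-1})$ — and, where the product is eventually zero or the recurrence breaks, whether a truncated nonzero solution survives.

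For part (i), $\lambda = 0$: the system becomes $\sum_{k \ge n} a_k y_k = 0$ for all $n$, and subtracting consecutive equations gives $a_n y_n = 0$, hence $y_n = 0$ for all $n$ (since the $a_n$ are strictly positive); therefore $N(R_a^*) = \{0\}$ and $0 \notin \sigma_p(R_a^*, c_0(s)^*)$. For part (ii), fix $\lambda = a_m \in S$. Then the factor $1 - \tfrac{a_m}{\lambda} = 0$ appears in the product at $k = m$, which forces $y_n = 0$ for $n > m$ along the telescoped solution; but one checks directly that the vector $y$ supported on $\{1,\dots,m\}$ — built by running the recurrence $y_{k+1} = (1 - a_k/\lambda) y_k$ for $k < m$ from any $y_1 \ne 0$ and setting $y_n = 0$ for $n > m$ — satisfies every equation of the system (the equations for $n \le m$ reduce correctly, and for $n > m$ both sides vanish). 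This $y$ is finitely supported, hence trivially in $\ell_1(s^{-1})$ and nonzero, so $a_m \in \sigma_p(R_a^*, c_0(s)^*)$; as $m$ was arbitrary, $S \subseteq \sigma_p(R_a^*, c_0(s)^*)$.

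For part (iii), take $\lambda \in \mathbb{C} \setminus \overline{S}$, so none of the factors $1 - a_k/\lambda$ vanishes and the telescoped solution $y_{n+1} = y_1 \prod_{k=1}^{n}(1 - a_k/\lambda)$ is genuinely the unique nonzero solution up to scaling. Then $\lambda \in \sigma_p(R_a^*, c_0(s)^*)$ iff this $y \in \ell_1(s^{-1})$, i.e. iff
\[
\sum_{n=1}^{\infty} \frac{1}{s_n} \prod_{k=1}^{n-1}\left| 1 - \frac{a_k}{\lambda}\right| < \infty.
\]
By Lemma \ref{eq1}, $\prod_{k=1}^{n-1}\bigl|1 - \tfrac{a_k}{\lambda}\bigr| \simeq n^{-\alpha\chi}$ with $\alpha = \Re(\tfrac{1}{\lambda})$, so the series converges iff $\sum_n \tfrac{1}{s_n n^{\alpha\chi}} < \infty$, which is exactly the stated condition. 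I expect the main obstacle to be the bookkeeping in part (ii): one must verify carefully that the finitely supported candidate really does satisfy the full infinite system $R_a^* y = \lambda y$ and not merely the differenced recurrence — the differencing can lose a degree of freedom, so the original top equation $\sum_{k \ge 1} a_k y_k = \lambda y_1$ must be checked to hold for the constructed $y$, which it does precisely because the recurrence was initialized to make the consecutive differences match and the surviving finite sum collapses correctly. A secondary point worth a line is confirming the identification $c_0(s)^* \cong \ell_1(s^{-1})$ and the resulting matrix form of $R_a^*$ acting on sequences, which follows from the isometry $D_s$ and the standard duality $c_0^* \cong \ell_1$ recalled in Section 2.
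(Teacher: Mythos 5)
Your proposal is correct and follows essentially the same route as the paper: identify $c_0(s)^*$ with $\ell_1(s^{-1})$, difference consecutive equations of $R_a^* y=\lambda y$ to get $y_{n+1}=(1-\tfrac{a_n}{\lambda})y_n$, handle $\lambda=0$ and $\lambda\in S$ by the resulting vanishing/finite support, and settle $\lambda\notin\overline{S}$ by Lemma \ref{eq1} giving $|y_n|\simeq n^{-\alpha\chi}|y_1|$. Your extra care in checking that the differenced recurrence really yields a solution of the full (un-differenced) system is a point the paper passes over silently, so it is a welcome refinement rather than a divergence.
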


\begin{proof} 
(i) Let the relation $R_a^* x = 0$ holds for some $x.$ This implies $\sum_{k = n}^\infty a_kx_k = 0$ for all $n \in \mathbb{N}$ and eventually   we have $x_n = 0$ for all $n \in \mathbb{N}.$ This shows that $0 \notin \sigma_p(R_a^*, c_0(s)^*).$

(ii) Consider the equation $R_a^* x = \lambda x$ for some $\lambda \in \mathbb{C}.$ This gives the following system
\begin{equation}
\left. \begin{aligned}
a_1x_1 + a_2x_2 + a_3x_3 + a_4x_4 + \cdots &=& \lambda x_1\\
a_2x_2 + a_3x_3 + a_4x_4 + \cdots &=& \lambda x_2\\
a_3x_3 + a_4x_4 + \cdots &=& \lambda x_3\\
  &\vdots&
\end{aligned}
  \right\rbrace.
\end{equation}
From the above system it follows that for all $n \in \mathbb{N}$ and $n \geq 2$
\begin{equation}\label{4.2.6}
x_n = \prod\limits_{j=1}^{n-1} \left(1- \frac{a_j}{\lambda} \right) x_1.
\end{equation}
If $\lambda = a_l$ for some $l \in \mathbb{N}$ then from the above equation, it follows
\[x_{l+1} = x_{l+2} = \cdots = 0.\]
Hence $\{x_k\} \in \ell_1(s^{-1}).$ It follows that $S \subseteq \sigma_p(R_a^*, c_0(s)^*).$

(iii) Now let $\lambda\in \mathbb{C}\setminus \overline{S}$. Then $R_a^* x = \lambda x$ implies the equation (\ref{4.2.6}) for $\{x_n\}.$ Then using Lemma \ref{eq1} we get for $n \in \mathbb{N},$
\begin{equation*}
|x_n| = \left|\prod\limits_{j=1}^{n-1} \left(1- \frac{a_j}{\lambda} \right) x_1 \right| \simeq  \frac{1}{n^{\alpha \chi}} \left| x_1 \right|,
\end{equation*}
where $\alpha = \Re(\frac{1}{\lambda}).$ Hence $\lambda \in \sigma_p(R_a^*, c_0(s)^*)$ if and only if $\sum_{n = 1}^\infty \frac{1}{s_n n^{\alpha \chi}} < \infty.$
\end{proof}

\begin{remark} \label{remark1}
The following facts are used to obtain the next lemma. For $\lambda \in \mathbb{C} \setminus \{0\}$ and a fixed $\chi > 0$ the following statements hold
\begin{enumerate}
\item[(i)] $ \left| \lambda - \frac{\chi}{2} \right| < \frac{\chi}{2} \mbox{ if and only if } \ \Re\left({\frac{1}{\lambda}}\right) > \frac{1}{\chi},$
\item[(ii)] $ \left| \lambda - \frac{\chi}{2} \right| = \frac{\chi}{2} \mbox{ if and only if } \ \Re\left({\frac{1}{\lambda}}\right) = \frac{1}{\chi},$
\item[(iii)] $ \left| \lambda - \frac{\chi}{2} \right| > \frac{\chi}{2} \mbox{ if and only if } \ \Re\left({\frac{1}{\lambda}}\right) < \frac{1}{\chi}.$
\end{enumerate}
\end{remark}

\begin{lemma}
Let $s = \{s_n\}$ be a bounded strictly positive sequence such that $R_a \in \mathcal{B}(c_0(s))$. Then
\[\sigma_p(R_a^*, c_0(s)^*) \subseteq \left\lbrace \lambda \in \mathbb{C} : \left| \lambda - \frac{\chi}{2} \right| < \frac{\chi}{2}  \right\rbrace \cup S.\]
\end{lemma}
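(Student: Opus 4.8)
The plan is to deduce the inclusion directly from the three parts of Theorem~\ref{thm4}, together with the geometric reformulation of the disc $\{\lambda : |\lambda - \chi/2| < \chi/2\}$ recorded in Remark~\ref{remark1}. Fix $\lambda \in \sigma_p(R_a^*, c_0(s)^*)$; the goal is to show that either $\lambda \in S$ or $|\lambda - \chi/2| < \chi/2$. If $\lambda \in S$ there is nothing to prove, so assume $\lambda \notin S$. The first step is the elementary observation that, since $\{a_n\}$ consists of strictly positive numbers with $na_n \to \chi \neq 0$, we have $a_n \to 0$, and hence $0$ is the only possible accumulation point of $S = \{a_n : n \in \mathbb{N}\}$; therefore $\overline{S} = S \cup \{0\}$. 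Combined with Theorem~\ref{thm4}(i), which excludes $0$ from $\sigma_p(R_a^*, c_0(s)^*)$, this forces $\lambda \in \mathbb{C} \setminus \overline{S}$, so that Theorem~\ref{thm4}(iii) applies (and, in particular, $\lambda \neq 0$, which is needed for Remark~\ref{remark1}).

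The second step is the quantitative heart of the argument. Writing $\alpha = \Re(1/\lambda)$, Theorem~\ref{thm4}(iii) gives $\sum_{n=1}^{\infty} \frac{1}{s_n n^{\alpha \chi}} < \infty$. Since $s = \{s_n\}$ is bounded, pick $M > 0$ with $s_n \le M$ for every $n$; then
\[
\sum_{n=1}^{\infty} \frac{1}{n^{\alpha \chi}} \le M \sum_{n=1}^{\infty} \frac{1}{s_n n^{\alpha \chi}} < \infty .
\]
Convergence of the $p$-series $\sum_n n^{-\alpha\chi}$ forces $\alpha\chi > 1$, and since $\chi > 0$ this is exactly $\alpha = \Re(1/\lambda) > 1/\chi$. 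By Remark~\ref{remark1}(i) this is equivalent to $|\lambda - \chi/2| < \chi/2$, which completes the proof.

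The result follows readily from Theorem~\ref{thm4}, so there is no substantial obstacle; the two points that require care rather than difficulty are: (a) justifying that $\overline{S} = S \cup \{0\}$, so that every $\lambda \notin S$ with $\lambda \neq 0$ indeed falls under case (iii) of Theorem~\ref{thm4} — this is where $\chi \neq 0$ is used, as it prevents $S$ from accumulating away from $0$; and (b) keeping track of the sign of $\chi$ when passing from $\alpha\chi > 1$ to $\Re(1/\lambda) > 1/\chi$, since $\chi > 0$ is precisely what makes the reformulation in Remark~\ref{remark1} available. One should also note that the inclusion need not be an equality: whether a given $\lambda$ with $\Re(1/\lambda) > 1/\chi$ actually lies in $\sigma_p(R_a^*, c_0(s)^*)$ still depends, through the series $\sum_n \frac{1}{s_n n^{\alpha\chi}}$, on the decay of the weight $\{s_n\}$.
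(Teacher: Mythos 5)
Your proof is correct and follows essentially the same route as the paper: reduce to $\lambda \notin S$, invoke Theorem~\ref{thm4}(iii) to get $\sum_n \frac{1}{s_n n^{\alpha\chi}} < \infty$, use boundedness of $\{s_n\}$ to deduce convergence of $\sum_n n^{-\alpha\chi}$ and hence $\alpha\chi > 1$, and translate this into $\left|\lambda - \frac{\chi}{2}\right| < \frac{\chi}{2}$ via Remark~\ref{remark1}(i). Your additional step verifying $\overline{S} = S \cup \{0\}$ and excluding $\lambda = 0$ via Theorem~\ref{thm4}(i) is in fact slightly more careful than the paper, which applies part (iii) to any $\lambda \notin S$ even though that part is stated only for $\lambda \in \mathbb{C}\setminus\overline{S}$.
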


\begin{proof}
It is already proved that $a_n \in \sigma_p(R_a^*, c_0(s)^*)$ for all $n \in \mathbb{N}.$ Let $\lambda \notin S$ such that $\lambda \in \sigma_p(R_a^*, c_0(s)^*).$ Then from Theorem \ref{thm4} it follows that $\sum_{n = 1}^\infty \frac{1}{s_n n^{\alpha \chi}} < \infty,$ where $\alpha = \Re(\frac{1}{\lambda}).$ Now
\[\frac{1}{s_n n^{\alpha \chi}} \geq ||s||_\infty^{-1}\frac{1}{ n^{\alpha \chi}}.\]
holds for all $n \in \mathbb{N}$. Hence
\[\lambda \in \sigma_p(R_a^*, c_0(s)^*) \Rightarrow \sum\limits_{n = 1}^\infty \frac{1}{ n^{\alpha \chi}} < \infty \Rightarrow \alpha \chi >1,\]
and consequently $\lambda$ satisfies $\left| \lambda - \frac{\chi}{2} \right| < \frac{\chi}{2}.$ This proves the lemma.
\end{proof}

The next result provides an estimation of the spectrum of the operator $R_a.$

\begin{theorem} \label{resolvant}
Let $s=\{s_n\}$ be a bounded, decreasing sequence of positive real numbers such that $R_a \in \mathcal{B}(c_0(s))$, then
\[\sigma(R_a, c_0(s)) \subseteq \left\lbrace \lambda \in \mathbb{C} \setminus \bar{S} : \left|\lambda - \frac{\chi}{2}\right| \leq \frac{\chi}{2}  \right\rbrace \cup \bar{S}.\]

%
\end{theorem}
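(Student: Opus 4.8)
The plan is to show that every $\lambda$ outside the claimed set lies in the resolvent set $\rho(R_a,c_0(s))$, i.e.\ that $R_a-\lambda I$ is boundedly invertible on $c_0(s)$. So fix $\lambda \notin \bar S$ with $\bigl|\lambda-\tfrac{\chi}{2}\bigr| > \tfrac{\chi}{2}$; by Remark~\ref{remark1} this means $\alpha\chi < 1$ where $\alpha = \Re(\tfrac1\lambda)$ (the boundary case $\alpha\chi=1$ is already excluded since we want the complement of a closed set — note the theorem states ``$\leq$'' so I actually need to handle $\bigl|\lambda-\tfrac\chi2\bigr|>\tfrac\chi2$, giving strict inequality $\alpha\chi<1$). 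Since $R_a$ is lower triangular with diagonal entries $a_n \to 0$, the matrix $R_a - \lambda I$ is lower triangular and invertible as a formal matrix whenever $\lambda \neq a_n$ for all $n$, which holds since $\lambda\notin\bar S$. The first step is to write down the entries of $(R_a-\lambda I)^{-1}$ explicitly: solving $(R_a-\lambda I)x = y$ recursively gives $x$ in terms of $y$, and the $(n,k)$ entry of the inverse will be a product of the form $\prod_{j=k+1}^{n}\bigl(1-\tfrac{a_j}{\lambda}\bigr)^{-1}$ times elementary factors in $a_n, a_k, \lambda$ — exactly the kind of expression handled by Lemma~\ref{lemmaeq1}.

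The second step is to estimate these entries. By Lemma~\ref{lemmaeq1}, for fixed $k$ we have $\prod_{j=k+1}^n \bigl|1-\tfrac{a_j}{\lambda}\bigr| \simeq n^{-\alpha\chi}$, so the $(n,k)$ entry of $(R_a-\lambda I)^{-1}$ behaves like (a constant depending on $k$) times $n^{\alpha\chi}$, more precisely like $\tfrac{C_k}{a_n}\cdot a_k \cdot n^{\alpha\chi}\cdot(\text{bounded})$, and using $a_n \simeq \chi/n$ this is of order $n^{\,\alpha\chi+1}$ up to constants that I must track carefully in $k$. Then I invoke Lemma~\ref{icmc} with both weight vectors equal to $s$: I need (i) $\sup_n s_n \sum_k \bigl|\tfrac{b_{nk}}{s_k}\bigr| < \infty$ and (ii) $\lim_n s_n b_{nk} = 0$ for each $k$, where $b_{nk}$ denotes the inverse matrix entries. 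Condition (ii) follows from the decay of $s_n$ together with the polynomial growth bound on $b_{nk}$, provided $s_n$ decays fast enough — but here is where I must be careful: the hypothesis is only that $R_a\in\mathcal B(c_0(s))$ and $s$ is bounded decreasing, which by Theorem~\ref{bounded} forces $s_n a_n\sum_{k=1}^n \tfrac1{s_k}$ bounded, hence (since $s$ is decreasing) $s_n a_n \cdot n \cdot \tfrac1{s_n} = n a_n$ is not what we get; rather $\sum_{k=1}^n \tfrac1{s_k} \geq \tfrac{n}{s_1}$ gives $s_n a_n \tfrac{n}{s_1}$ bounded, so $s_n n a_n$ bounded, i.e.\ $s_n \cdot \chi \cdot(\text{bdd})$ bounded — this says essentially nothing new since $s_n$ is already bounded. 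I will need to extract the useful consequence that $s_n \sum_{k=1}^n \tfrac1{s_k}$ grows no faster than $O(n/a_n) = O(n^2)$, and combine it with $\alpha\chi < 1$ to close the row-sum estimate.

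The main obstacle, and the crux of the argument, is verifying condition (i) of Lemma~\ref{icmc}: the row sums $s_n \sum_{k=1}^{n} \bigl|\tfrac{b_{nk}}{s_k}\bigr|$ must be uniformly bounded. Writing $b_{nk} \simeq \tfrac{a_k}{a_n}\cdot\tfrac{1}{\prod_{j=k+1}^n|1-a_j/\lambda|} \simeq \tfrac{a_k}{a_n}\cdot n^{\alpha\chi}\cdot(\text{const indep.\ of }k)$ — wait, Lemma~\ref{lemmaeq1} gives constants that may depend on $m=k$, so I must instead use the ratio form $\prod_{j=k+1}^n|1-a_j/\lambda| \simeq \prod_{j=1}^n / \prod_{j=1}^k \simeq k^{\alpha\chi}/n^{\alpha\chi}$ with Lemma~\ref{eq1}, yielding $b_{nk}\simeq \tfrac{a_k}{a_n}(n/k)^{\alpha\chi} \simeq \tfrac{1}{n}\cdot\tfrac{1}{k}\cdot k \cdot (n/k)^{\alpha\chi}$ roughly, i.e.\ $b_{nk} = O\bigl(n^{\alpha\chi-1}k^{-\alpha\chi}\bigr)$. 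Then the row sum is $O\bigl(s_n n^{\alpha\chi-1}\sum_{k=1}^n \tfrac{k^{-\alpha\chi}}{s_k}\bigr)$. Since $s$ is decreasing, $\tfrac1{s_k}$ is increasing, and I bound $\sum_{k=1}^n \tfrac{k^{-\alpha\chi}}{s_k}$; the worst case is when the tail dominates, giving $O\bigl(\tfrac{n^{1-\alpha\chi}}{s_n}\bigr)$ when $\alpha\chi<1$ (comparing to an integral), whence the row sum is $O\bigl(s_n\cdot n^{\alpha\chi-1}\cdot\tfrac{n^{1-\alpha\chi}}{s_n}\bigr) = O(1)$. This is exactly where $\alpha\chi<1$ — i.e.\ $\bigl|\lambda-\tfrac\chi2\bigr|>\tfrac\chi2$ — is used decisively, and where I expect the delicate bookkeeping of the $\simeq$-constants and the monotonicity of $s$ to require the most care. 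Finally, having shown $(R_a-\lambda I)^{-1}\in\mathcal B(c_0(s))$ and that it is a genuine two-sided inverse (the formal lower-triangular computation guarantees this), I conclude $\lambda\in\rho(R_a,c_0(s))$, which gives the stated inclusion for $\sigma(R_a,c_0(s))$.
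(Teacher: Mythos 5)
Your proposal follows essentially the same route as the paper's proof: show that every $\lambda\notin\bar S$ with $\bigl|\lambda-\tfrac{\chi}{2}\bigr|>\tfrac{\chi}{2}$ (equivalently $\alpha\chi<1$ by Remark~\ref{remark1}) lies in the resolvent set, by writing the explicit lower-triangular inverse, estimating its entries via Lemma~\ref{eq1}/\ref{lemmaeq1} as $b_{nk}=O(n^{\alpha\chi-1}k^{-\alpha\chi})$, and verifying both conditions of Lemma~\ref{icmc} using $s_n/s_k\le 1$ and the integral comparison $\sum_{k=1}^{n}k^{-\alpha\chi}=O(n^{1-\alpha\chi})$. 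The only wobble is your tentative prefactor for the inverse entries (the paper's off-diagonal entry is $b_{nk}=-a_n\big/\bigl(\lambda^2\prod_{j=k}^{n}(1-\tfrac{a_j}{\lambda})\bigr)$, so the elementary factor is $a_n$, not $\tfrac{a_k}{a_n}$), but the asymptotic you ultimately plug into the row-sum estimate is the correct one, so the argument closes as in the paper.
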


\begin{proof}
From the inclusion relation $(ii)$ in Theorem \ref{thm4} we have, $S \subseteq \sigma_p(R_a^*, c_0(s)^*)$. Also it is known that for a Banach space $X$, $\sigma_p(T^*, X^*) \subseteq \sigma(T, X).$ Therefore we have $S \subseteq \sigma(R_a, c_0(s))$ with the fact that $\sigma(R_a, c_0(s))$ is a closed subset of $\mathbb{C}.$ Hence $\overline{S} \subset \sigma(R_a, c_0(s))$. Again for $\lambda \notin \overline{S}$ the inverse $(R_a - \lambda I)^{-1}$ exists and has the matrix representation
\begin{equation}
(R_a - \lambda I)^{-1} = (b_{nk}) = \left\lbrace \begin{aligned}
&\frac{1}{a_n - \lambda}, \ \ \ \ \ \ \ \ n = k\\
&\frac{-a_n}{\lambda^2 \prod\limits_{j = k}^n (1 - \frac{a_j}{\lambda})}, \ \ \  1 \leq k < n \\
&0, \ \ \ \ \ \ \ \ \ \ \ \ \mbox{otherwise.}
\end{aligned} \right.
\end{equation}
From Lemma \ref{icmc} it follows that $(R_a - \lambda I)^{-1} = (b_{nk}) \in \mathcal{B}(c_0(s))$ if and only if both the conditions $\sup\limits_{n \in \mathbb{N}} s_n \sum\limits_{k = 1}^{n} \frac{|b_{nk}|}{s_k}<\infty,$ and $\lim\limits_{n \to \infty} s_n |b_{nk}| = 0, \ \forall k \in \mathbb{N}$ are satisfied. Let $\lambda \in \left\lbrace \lambda \in \mathbb{C} \setminus \bar{S} : \left|\lambda - \frac{\chi}{2}\right| > \frac{\chi}{2}  \right\rbrace$. Then we have for $n \in \mathbb{N}$
\begin{eqnarray} \label{eqnew1}
s_n \sum\limits_{k = 1}^{n} \frac{|b_{nk}|}{s_k} &=& s_n \sum\limits_{k = 1}^{n-1} \frac{|b_{nk}|}{s_k} + |b_{nn}|\nonumber \\
												 &=& s_n \sum\limits_{k = 1}^{n-1} \frac{a_n}{\left| \lambda^2 \prod\limits_{j = k}^n (1 - \frac{a_j}{\lambda})\right| s_k} + \left| \frac{1}{a_n - \lambda} \right|.
\end{eqnarray}
Also, since $\lim_{n \to \infty} \frac{n^a}{(n+1)^a} = 1$ for all real $a$, Lemma \ref{lemmaeq1} yields
\begin{equation} \label{eq10}
 \frac{D_1}{(n+1)^{\alpha \chi}} \leq \prod\limits_{k = 1}^n \left| 1 - \frac{a_k}{\lambda} \right| \leq \frac{D_2}{(n+1)^{\alpha \chi}},
\end{equation}
for some positive constants $D_1$ and $D_2$ and Lemma \ref{lemmaeq1} and equation (\ref{eq10}) together imply
\begin{equation} \label{eq11}
M_1 a_n \frac{n^{\alpha \chi}}{k^{\alpha \chi}} \leq \frac{a_n}{\prod\limits_{j=k}^n \left| 1 - \frac{a_j}{\lambda} \right|} \leq M_2 a_n \frac{n^{\alpha \chi}}{k^{\alpha \chi}},
\end{equation}
for some positive constants $M_1$ and $M_2.$ Therefore using the relation (\ref{eq11}) in the equation (\ref{eqnew1}) we obtain for $n \in \mathbb{N}$
\begin{eqnarray*}
s_n \sum\limits_{k = 1}^{n} \frac{|b_{nk}|}{s_k} &\leq & M_2 \frac{s_n}{|\lambda|^2} \sum\limits_{k = 1}^{n-1} \frac{a_n n^{\alpha \chi}}{s_k k^{\alpha \chi}} + \frac{1}{d_\lambda},
\end{eqnarray*}
where $d_\lambda = \mbox{dist } (\lambda, \bar{S}).$ Therefore we have
\[\sup_n a_ns_n n^{\alpha \chi} \sum\limits_{k = 1}^{n-1} \frac{1}{s_k k^{\alpha \chi}} < \infty \Rightarrow \sup\limits_{n} s_n \sum\limits_{k = 1}^{n} \frac{|b_{nk}|}{s_k} < \infty.\]
As  $\lambda \in \left\lbrace \lambda \in \mathbb{C} \setminus \bar{S} : \left|\lambda - \frac{\chi}{2}\right| > \frac{\chi}{2}  \right\rbrace$,
 this is equivalent to $\alpha \chi < 1.$ In this case for $n \in \mathbb{N},$
\begin{eqnarray} \label{eq32}
	a_n n^{\alpha \chi} \sum\limits_{k = 1}^{n-1} \frac{s_n}{s_k k^{\alpha \chi}} &\leq & (na_n) n^{\alpha \chi - 1} \sum\limits_{k = 1}^{n-1} \frac{1}{ k^{\alpha \chi}} \leq D_1 n^{\alpha \chi - 1} \sum\limits_{k = 1}^{n-1} \frac{1}{ k^{\alpha \chi}}
\end{eqnarray}
for some positive constant $D_1$ as $\{na_n\}$ is convergent sequence and $\{s_n\}$ is decreasing. Also
\[ \sum\limits_{k=1}^{n-1} \frac{1}{k^{\alpha \chi}} \leq  \int\limits_0^{n-1} \frac{1}{x^{\alpha \chi}} dx =  \lim\limits_{\epsilon \to 0+} \int\limits_\epsilon^{n-1} \frac{1}{x^{\alpha \chi}} dx = \frac{(n-1)^{1 - \alpha\chi}}{1- \alpha\chi}.\]
The above inequality and (\ref{eq32}) together imply
\[\sup_n a_ns_n n^{\alpha \chi} \sum\limits_{k = 1}^{n-1} \frac{1}{s_k k^{\alpha \chi}} < \infty.\] 
Also from (\ref{eq11}) we have for each $k \in \mathbb{N}$ and large $n$
\[0 \leq s_n |b_{nk}| = \frac{s_na_n}{\left| \lambda^2 \prod\limits_{j = k}^n (1 - \frac{a_j}{\lambda})\right|} \leq M_2 a_n s_n \frac{n^{\alpha \chi}}{|\lambda|^2 k^{\alpha \chi}}.\]
Hence $\lim_{n \to \infty} a_n s_n n^{\alpha \chi} = 0  \Rightarrow \lim_{n \to \infty} s_n |b_{nk}| = 0.$

Again
$0 \leq a_n s_n n^{\alpha \chi} = (na_n) s_n n^{\alpha \chi - 1} \leq D_2 n^{\alpha \chi - 1}$
holds for some positive constant $D_2$ as $\{na_n\}$ and $\{s_n\}$ are bounded. This also proves that $\lim\limits_{n \to \infty} a_n s_n n^{\alpha \chi} = 0.$ Hence $(R_a - \lambda I)^{-1}$ exists and $(R_a - \lambda I)^{-1} \in \mathcal{B}(c_0(s))$ and consequently $\lambda \in \rho(R_a, c_0(s)).$ Finally we have
\[\left\lbrace \lambda \in \mathbb{C} \setminus \bar{S} : \left|\lambda - \frac{\chi}{2}\right| > \frac{\chi}{2}  \right\rbrace \subseteq \rho(R_a, c_0(s)).\]
This proves the result.

\end{proof}


Now we summarize all the result on the spectrum and fine spectrum of the Rhaly operator over the sequence space $c_0(s).$ Consider the following sets
\begin{eqnarray*}
A_1 &=& \left\lbrace \lambda \in S: \lim\limits_{n \to \infty} a_n s_n n^{\alpha \chi} = 0  \right\rbrace, \\
A_2 &=& \left\lbrace \lambda \in \mathbb{C} \setminus (S \cup \{ 0 \}) : \sum\limits_{n =1}^\infty \frac{1}{s_n n^{\alpha \chi}} < \infty  \right\rbrace,
\end{eqnarray*}
where $\alpha = \Re(\frac{1}{\lambda}).$ In this setting we have the following result.

\begin{theorem} \label{main_spectrum}
Let $\{s_n\}$ be a bounded and strictly positive sequence such that $R_a \in \mathcal{B}(c_0(s))$ and $\lim_{n \to \infty} n a_n = \chi \neq 0$ then the following statements hold
\begin{enumerate}
\item[(i)] $\sigma_p(R_a, c_0(s)) = A_1,$
\item[(ii)] $\sigma_p(R_a^*, c_0(s)^*) = A_2 \cup S,$
\item[(iii)] $\sigma_r(R_a, c_0(s)) = (A_2 \cup S) \setminus A_1 = A_2 \cup (S \setminus A_1).$
\end{enumerate}
In addition, if the sequence $\{s_n\}$ is a decreasing sequence then
\begin{enumerate}
\item[(iv)] $\sigma(R_a, c_0(s)) \subseteq \left\lbrace \lambda \in \mathbb{C} \setminus \bar{S} : \left|\lambda - \frac{\chi}{2}\right| \leq \frac{\chi}{2}  \right\rbrace \cup \bar{S},$
\item[(v)] $ \{0\} \subseteq \sigma_c(R_a, c_0(s)) \subseteq (\left\lbrace \lambda \in \mathbb{C} \setminus \bar{S} : \left|\lambda - \frac{\chi}{2}\right| \leq \frac{\chi}{2}  \right\rbrace \cup \bar{S}) \setminus (A_2 \cup S).$
\end{enumerate}
\end{theorem}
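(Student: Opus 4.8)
The plan is to assemble the five assertions from the work already done in the preceding theorems and lemmas, treating items (i)--(iii) as essentially bookkeeping and devoting the real effort to the continuous spectrum estimate in (v).

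\textbf{Items (i)--(iii).} Part (i) is simply Theorem \ref{point}, which identifies $\sigma_p(R_a,c_0(s))$ with $A_1$ (note $\alpha=\Re(\tfrac1\lambda)=\tfrac1\lambda$ on $S$, so the two descriptions agree). Part (ii) collects Theorem \ref{thm4}: clause (i) of that theorem removes $0$, clause (ii) gives $S\subseteq\sigma_p(R_a^*,c_0(s)^*)$, and clause (iii) identifies the part of the point spectrum of the adjoint outside $\overline S$ with the summability condition defining $A_2$; since every $\lambda\in S$ lies in $\overline S$ and is already counted, the union is exactly $A_2\cup S$. (One should remark that on $\overline S\setminus S$ membership is subsumed because $\overline S=\{0\}\cup S$ when $\chi\neq0$, so nothing is missed.) For part (iii) I use the standard Banach-space dichotomy: for a bounded operator $T$ on $X$, $\lambda\in\sigma_r(T,X)$ iff $T-\lambda I$ is injective but $\overline{R(T-\lambda I)}\neq X$, and by Lemma \ref{denserange} the latter is equivalent to $T^*-\lambda I$ being \emph{not} injective, i.e. $\lambda\in\sigma_p(T^*,X^*)$. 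Hence $\sigma_r(R_a,c_0(s))=\sigma_p(R_a^*,c_0(s)^*)\setminus\sigma_p(R_a,c_0(s))=(A_2\cup S)\setminus A_1$; that this equals $A_2\cup(S\setminus A_1)$ follows because $A_1\subseteq S$ and $A_2\cap S=\emptyset$, so subtracting $A_1$ only thins out the $S$-part.

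\textbf{Item (iv).} This is exactly Theorem \ref{resolvant}, valid under the extra hypothesis that $\{s_n\}$ is decreasing.

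\textbf{Item (v).} Here I argue as follows. The identity $\sigma(R_a,c_0(s))=\sigma_p\cup\sigma_r\cup\sigma_c$ (disjoint union) gives $\sigma_c(R_a,c_0(s))=\sigma(R_a,c_0(s))\setminus(\sigma_p\cup\sigma_r)=\sigma(R_a,c_0(s))\setminus(A_2\cup S)$, using (i) and (iii) and the fact that $A_1\subseteq A_2\cup S$. Combining with the spectral inclusion (iv) yields the upper bound $\sigma_c(R_a,c_0(s))\subseteq(\{\lambda\in\mathbb C\setminus\bar S:|\lambda-\tfrac\chi2|\le\tfrac\chi2\}\cup\bar S)\setminus(A_2\cup S)$. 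For the lower bound $\{0\}\subseteq\sigma_c(R_a,c_0(s))$ I must check three things at $\lambda=0$: that $R_a$ is injective, that $R_a$ has dense range, and that $R_a$ is not surjective. Injectivity: from the system \eqref{eqn2} with $\lambda=0$, the first equation forces $x_1=0$ (as $a_1\neq0$), then inductively $a_k(x_1+\cdots+x_k)=0$ forces $x_k=0$; so $N(R_a)=\{0\}$. Dense range: by Lemma \ref{denserange} it suffices that $R_a^*$ is injective, which is precisely clause (i) of Theorem \ref{thm4}, $0\notin\sigma_p(R_a^*,c_0(s)^*)$. Non-surjectivity: since $R_a\in\mathcal B(c_0(s))$ and (by Theorem \ref{compact}, or directly, using $s_n a_n\sum_{k\le n}\tfrac1{s_k}\to\chi\cdot?$ — more cleanly) one shows $0$ cannot be in the resolvent set because $R_a^{-1}$ would be unbounded; the cleanest route is to observe $0\in\sigma(R_a,c_0(s))$ because the spectrum is nonempty and closed and $0\in\overline S$ (as $a_n=\tfrac\chi n+o(\tfrac1n)\to0$), so $0\in\bar S\subseteq\sigma(R_a,c_0(s))$ by (iv)'s proof. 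Since $0\in\sigma$ but $0\notin\sigma_p$ (injectivity) and $0\notin\sigma_r$ (dense range), the only remaining possibility is $0\in\sigma_c(R_a,c_0(s))$.

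\textbf{Main obstacle.} The bookkeeping in (iii) and (v) hinges on the inclusion $A_1\subseteq A_2\cup S$ and on $A_2\cap S=\emptyset$; the latter is immediate from the definition of $A_2$ as a subset of $\mathbb C\setminus(S\cup\{0\})$, and the former holds because $A_1\subseteq S$ by Theorem \ref{point}. So the genuinely non-trivial point is confirming $0\in\sigma_c$: one needs both that $R_a^*$ is injective at $0$ (already in Theorem \ref{thm4}(i)) and that $0\in\sigma(R_a,c_0(s))$, for which I lean on $0\in\overline S$ together with the established fact $\overline S\subseteq\sigma(R_a,c_0(s))$ from the proof of Theorem \ref{resolvant}. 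I expect no serious analytic difficulty — the whole statement is a consolidation of Theorems \ref{point}, \ref{thm4}, \ref{resolvant} plus the abstract spectral decomposition and Lemma \ref{denserange}.
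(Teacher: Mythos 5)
Your proposal is correct and follows essentially the same route as the paper: items (i)--(iv) are read off from Theorems \ref{point}, \ref{thm4} and \ref{resolvant}, (iii) comes from Lemma \ref{denserange} via $\sigma_r = \sigma_p(R_a^*,c_0(s)^*)\setminus\sigma_p(R_a,c_0(s))$, and (v) from the disjoint partition of the spectrum. You are in fact slightly more careful than the paper on the inclusion $\{0\}\subseteq\sigma_c$, since you explicitly supply the needed fact $0\in\overline{S}\subseteq\sigma(R_a,c_0(s))$ (from the proof of Theorem \ref{resolvant}) alongside $0\notin\sigma_p$ and $0\notin\sigma_r$, a step the paper leaves implicit.
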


\begin{proof}
The relations (i) and (ii) are direct consequences of Theorem \ref{point} and Theorem \ref{thm4} respectively. Also from lemma \ref{denserange} it follows that $\sigma_r(R_a, c_0(s)) = \sigma_p(R_a^*, c_0(s)^*) \setminus \sigma_p(R_a, c_0(s))$ which gives the expression for $\sigma_r(R_a, c_0(s))$ in (iii). The relation (iv) follows from Theorem \ref{resolvant}. The first inclusion relation in (v) follows from Theorem \ref{point} and Theorem \ref{thm4}(i) and the other inclusion relation in (v) follows from the fact that $\sigma_p(R_a, c_0(s)),$ $\sigma_r(R_a, c_0(s)),$ and $\sigma_c(R_a, c_0(s))$ forms a partition of $\sigma(R_a, c_0(s)).$ 
\end{proof}

\section{Operator Ideal}
The theory of operator ideals holds a distinctive place in functional analysis because of its numerous applications in spectrum theory, Banach space geometry, the theory of eigenvalue distributions, etc. Many researchers have developed classes of operator ideals that result from operators acting over sequence spaces \cite{maji2014operator,maji2015some,yaying2021sequence}. In this section, we introduce a new class of operator ideal with the help of $s$-number sequences and discussed of its properties. Let $X$, $Y$, $Z$, $X_0$ and $Y_0$ are the Banach spaces, $\mathcal{B}$ denotes the class of all bounded linear operators between any pair of Banach spaces, $X'$ denotes the dual of $X$, $x'$ denotes the continuous linear functional on $X$. The sequences $r=\{r_n\}$, $t=\{t_n\}$ and $u=\{u_n\}$ are the weight vectors corresponding to sequence spaces $c_0(r)$, $c_0(t)$ and $c_0(u)$ respectively. Let $x' \in X'$ and $y \in Y$ then the mapping $(x'\otimes y) : X \to Y$ is defined by
$(x' \otimes  y)(x) = x'(x)y,~ x \in X$. Before proceeding, we list certain known definitions and results which are necessary for our results.
\begin{definition}\cite[p.79]{pietsch1986eigenvalues}
A map $s: \mathcal{B} \to \mathbb{R}^{\mathbb{N}}$ which assigns every operator $\phi \in \mathcal{B}$ to a non-negative sequence $\{s_n(\phi$)\} is called a $s$-number sequence if it satisfies the following properties: 
\begin{itemize}
	\item [(i)] $\|\phi\|=s_1(\phi)\ge s_2(\phi)\ge \cdots \ge 0$ for $\phi \in \mathcal{B}(X,Y)$,
	\item [(ii)] $s_{m+n-1}(\phi+\psi) \le s_m(\phi)+s_n(\psi)$ for $\phi,\psi \in \mathcal{B}(X,Y)$,
	\item [(iii)] $s_n(\zeta \phi \eta)\le \|\zeta\| s_n(\phi) \|\eta\|$ for $\zeta\in \mathcal{B}(Y,Y_0)$, $\phi \in \mathcal{B}(X,Y)$, $\eta \in \mathcal{B}(X_0,X)$,
	\item [(iv)] If rank$(\phi)<n$, then $s_n(\phi)=0$,
	\item [(v)] $s_n(I_2:\ell_2^{(i)} \to \ell_2^{(i)})=1$, where $I_2$ denotes the identity operator on the $i$-dimensional Hilbert space $\ell_2^{(i)}$.
\end{itemize}
We call $s_n(\phi)$ the $n$-th $s$-number of the operator $\phi$.
\end{definition}
\begin{definition}\cite[p.25]{pietsch1986eigenvalues}
For a subset $\mathcal{A}(X,Y)$ of $\mathcal{B}(X,Y)$, the class $\mathcal{A}=\cup_{X,Y}\mathcal{A}(X,Y)$ is said to be an operator ideal if each component $\mathcal{A}(X,Y)$ satisfies the following conditions:
\begin{itemize}
	\item [(i)] $x'\otimes y \in \mathcal{A}(X,Y)$ for $x' \in X'$ and $y \in Y$,
	\item [(ii)] $\phi +\psi \in \mathcal{A}(X,Y)$ for $\phi, \psi \in \mathcal{A}(X,Y)$,
	\item [(iii)] $\zeta \phi \eta\in \mathcal{A}(X_0, Y_0)$ for $\eta \in \mathcal{B}(X_0,X)$, $\phi \in \mathcal{A}(X,Y) $, $\zeta \in \mathcal{B}(Y,Y_0)$.
\end{itemize}
\end{definition}
\begin{definition}\cite[p.25]{pietsch1986eigenvalues}
	A function $\Gamma : \mathcal{A} \to \mathbb{R}^+$ which assigns to every operator $\phi \in \mathcal{A}$ a non-negative number $\Gamma(\phi)$ is called a quasi-norm on the operator ideal $\mathcal{A}$ if it satisfies the following properties:
	\begin{itemize}
		\item [(i)] $\Gamma(x' \otimes y)=\|x'\| \|y\|$ for $x' \in X'$ and $y \in Y$,
		\item [(ii)] $\Gamma(\phi+\psi) \le c(\Gamma(\phi)+\Gamma(\psi))$ for $\phi, \psi \in \mathcal{A}(X,Y)$ where $c \ge 1$ is a constant,
		\item [(iii)] $\Gamma(\zeta \phi \eta) \le \|\zeta\| \Gamma(\phi) \|\eta\|$ for $\eta \in \mathcal{B}(X_0,X), \phi\in \mathcal{A}(X,Y), \zeta \in \mathcal{B}(Y,Y_0)$.
	\end{itemize}
\end{definition}
\begin{definition}\cite[p.81]{pietsch1986eigenvalues}
An s-number $s$ is called multiplicative if
$$s_{m+n-1}(\phi\psi)\le s_m(\phi)s_n(\psi) ~~\text{for}~~\psi\in \mathcal{L}(X,Y) ~~ \text{and}~~\phi\in \mathcal{L}(Y,Z).$$
\end{definition}
\begin{definition}\cite[p.26]{pietsch1986eigenvalues}
An operator ideal $\mathcal{A}$ is closed if all components $\mathcal{A}(X,Y)$  are closed linear subsets of $\mathcal{B}(X,Y)$.
\end{definition}
The following lemma is useful in this context.
\begin{lemma}\label{lemma12}\cite[p.80]{pietsch1986eigenvalues}
If $\phi, \psi \in \mathcal{B}(X,Y)$ then $|s_i(\phi)-s_i(\psi)| \le \| \phi- \psi\|, ~~i \in \mathbb{N}.$
\end{lemma}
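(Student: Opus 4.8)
The plan is to obtain the inequality directly from the axioms of an $s$-number sequence in the first definition of Section~4, using only property (ii) (the subadditivity-type estimate) together with property (i), which identifies the first $s$-number with the operator norm. No further machinery is required; the argument is a short symmetrization.

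First I would write $\phi = \psi + (\phi - \psi)$ and invoke property (ii) with the index choice $m = i$, $n = 1$, so that $m + n - 1 = i$. This gives
\[
s_i(\phi) = s_{i+1-1}\bigl(\psi + (\phi-\psi)\bigr) \le s_i(\psi) + s_1(\phi-\psi) = s_i(\psi) + \norm{\phi - \psi},
\]
where the last equality is property (i). Hence $s_i(\phi) - s_i(\psi) \le \norm{\phi - \psi}$. Interchanging the roles of $\phi$ and $\psi$ — that is, writing $\psi = \phi + (\psi - \phi)$ and applying property (ii) once more with $m = i$, $n = 1$ — yields $s_i(\psi) - s_i(\phi) \le \norm{\psi - \phi} = \norm{\phi - \psi}$. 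Combining the two bounds gives $\mo{s_i(\phi) - s_i(\psi)} \le \norm{\phi - \psi}$ for every $i \in \mathbb{N}$, as claimed.

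There is essentially no obstacle here: the only point that warrants a moment's attention is the bookkeeping in property (ii), namely that setting one of the two indices equal to $1$ forces the superscript on the left-hand side to be exactly $i$, which is what makes $s_1$ (and hence the operator norm) appear on the right. It is worth recording that the content of the lemma is precisely that, for each fixed $i$, the map $\phi \mapsto s_i(\phi)$ is $1$-Lipschitz on $\mathcal{B}(X,Y)$; this is exactly the continuity property that will be used later to pass to limits inside the $s$-numbers when establishing that $\chi_{c_0(r)}^{(s)}$ is a closed operator ideal.
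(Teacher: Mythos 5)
Your argument is correct and is the standard one: the paper itself gives no proof (it cites Pietsch, p.~80), and your derivation from properties (i) and (ii) of the $s$-number axioms, with the index choice $m=i$, $n=1$ followed by symmetrization, is exactly the textbook proof of this Lipschitz estimate. Nothing to add.
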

Now we define sequence space $\chi_{c_0(r)}$ associated to the Rhaly operator $R_a$ as follows 
\[\chi_{c_0(r)}=\{v\in \mathbb{C}^{\mathbb{N}}:R_a(v)\in c_{0}(r)\}.\]
Thus \[\chi_{c_0(r)}=\left\lbrace v\in \mathbb{C}^{\mathbb{N}} :\lim_{i \to \infty}\left( \sum_{j=1}^ia_iv_j\right) r_i =0\right\rbrace .\]
An operator $\phi \in \mathcal{B}(X,Y)$ is called s-type $\chi_{c_0(r)}$ operator if
\[ \lim_{i \to \infty}\left( a_i \sum_{j=1}^is_j(\phi)\right) r_i = 0\]
We denote the class of all s-type $\chi_{c_0(r)}$ operators by $\chi_{c_0(r)}^{(s)}$ i.e.,
\[\chi_{c_0(r)}^{(s)}=\left\lbrace \phi\in \mathcal{B}:  \lim_{i \to \infty}\left( a_i \sum_{j=1}^is_j(\phi)\right) r_i = 0\right\rbrace .\]
If $\phi \in \mathcal{B}(X,Y)$ then 
\[\chi_{c_0(r)}^{(s)}(X,Y)=\left\lbrace \phi\in \mathcal{B}(X,Y):  \lim_{i \to \infty}\left( a_i \sum_{j=1}^is_j(\phi)\right) r_i = 0\right\rbrace .\]

\begin{lemma} 
If $r_n\le t_n$ for all $n$ then $\chi_{c_0(t)}^{(s)} \subseteq \chi_{c_0(r)}^{(s)}$.
\end{lemma}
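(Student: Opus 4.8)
The plan is straightforward: show that every operator of $s$-type $\chi_{c_0(t)}$ is automatically of $s$-type $\chi_{c_0(r)}$, which is a direct pointwise comparison. First I would fix $\phi \in \chi_{c_0(t)}^{(s)}(X,Y)$, so that by definition
\[
\lim_{i \to \infty}\left( a_i \sum_{j=1}^i s_j(\phi)\right) t_i = 0.
\]
Since the $s$-numbers $s_j(\phi)$ are non-negative and $a_i > 0$ for all $i$ (recall $\{a_n\}$ is a sequence of positive reals), the quantity $a_i \sum_{j=1}^i s_j(\phi)$ is non-negative for every $i$. Combining this with the hypothesis $0 < r_i \le t_i$ gives
\[
0 \le \left( a_i \sum_{j=1}^i s_j(\phi)\right) r_i \le \left( a_i \sum_{j=1}^i s_j(\phi)\right) t_i
\]
for all $i \in \mathbb{N}$.

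Then I would apply the squeeze theorem: the right-hand side tends to $0$ by assumption and the left-hand side is identically $0$, so
\[
\lim_{i \to \infty}\left( a_i \sum_{j=1}^i s_j(\phi)\right) r_i = 0,
\]
which is precisely the statement that $\phi \in \chi_{c_0(r)}^{(s)}(X,Y)$. Since $X$ and $Y$ were arbitrary Banach spaces, this yields the component-wise inclusion $\chi_{c_0(t)}^{(s)}(X,Y) \subseteq \chi_{c_0(r)}^{(s)}(X,Y)$ for every pair $(X,Y)$, hence $\chi_{c_0(t)}^{(s)} \subseteq \chi_{c_0(r)}^{(s)}$.

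There is no real obstacle here — the argument is an elementary monotonicity-plus-squeeze estimate. The only point that warrants a word of care is ensuring the terms whose limit we compare are genuinely non-negative, which is where we use that the weights are strictly positive and the entries $a_i$ are positive (so that no sign cancellation can occur and the inequality between the $r$-weighted and $t$-weighted expressions is valid term by term). Everything else is immediate from the definition of the class $\chi_{c_0(\cdot)}^{(s)}$.
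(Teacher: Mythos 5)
Your proof is correct and follows essentially the same route as the paper's: fix $\phi$ in the larger class, compare the $r$-weighted and $t$-weighted expressions term by term using $r_i \le t_i$, and conclude the limit is zero. Your version is slightly more careful in justifying non-negativity and invoking the squeeze theorem explicitly, but the argument is the same.
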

\begin{proof}
	Suppose $\phi \in \chi_{c_0(t)}^{(s)}$ then 
	\[\lim_{i \to \infty}\left(a_i \sum_{j=1}^{i}s_j(\phi)\right)t_i=0  \]
	Now, 
	\begin{align*}
	\lim_{i \to \infty}\left(a_i \sum_{j=1}^{i}s_j(\phi)\right)r_i \le \lim_{i \to \infty}\left(a_i \sum_{j=1}^{i}s_j(\phi)\right)t_i
	=0
	\end{align*}
Thus $\phi \in \chi_{c_0(r)}^{(s)}$.
\end{proof}

\begin{theorem}
The following statements hold,
	\begin{itemize}
		\item [(i)]$\chi_{c_0(r)}^{(s)}$ is an operator ideal if $\{a_n\} \in c_0(r)$,
		\item [(ii)]$\chi_{c_0(r)}^{(s)}$ is a closed operator ideal if $\{na_n\} \in c_0(r)$.
	\end{itemize}

\end{theorem}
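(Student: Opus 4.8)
The plan is to verify, componentwise, the three defining conditions of an operator ideal for part (i), noting that only the rank-one condition actually uses the hypothesis $\{a_n\}\in c_0(r)$. First I would treat the rank-one operators: for $x'\in X'$ and $y\in Y$ the operator $x'\otimes y$ has rank at most one, so by property (iv) of an $s$-number sequence $s_j(x'\otimes y)=0$ for all $j\ge 2$, while $s_1(x'\otimes y)=\|x'\otimes y\|=\|x'\|\,\|y\|$. Hence $\sum_{j=1}^i s_j(x'\otimes y)=\|x'\|\,\|y\|$ for every $i$, and
\[
a_i\Big(\sum_{j=1}^i s_j(x'\otimes y)\Big)r_i=\|x'\|\,\|y\|\,a_ir_i\longrightarrow 0,
\]
exactly because $\{a_n\}\in c_0(r)$; thus $x'\otimes y\in\chi_{c_0(r)}^{(s)}(X,Y)$.

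Next I would check closure under sums and under composition with bounded operators, neither of which needs any hypothesis on $\{a_n\}$ beyond membership of the operators involved. For $\phi,\psi\in\chi_{c_0(r)}^{(s)}(X,Y)$, property (ii) of $s$-numbers together with monotonicity gives $s_{2k-1}(\phi+\psi)\le s_k(\phi)+s_k(\psi)$ and $s_{2k}(\phi+\psi)\le s_{k+1}(\phi)+s_k(\psi)\le s_k(\phi)+s_k(\psi)$; pairing consecutive indices yields
\[
\sum_{j=1}^i s_j(\phi+\psi)\le 2\sum_{k=1}^i\big(s_k(\phi)+s_k(\psi)\big),
\]
so multiplying by $a_ir_i$ and letting $i\to\infty$ gives $\phi+\psi\in\chi_{c_0(r)}^{(s)}(X,Y)$. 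For $\eta\in\mathcal{B}(X_0,X)$, $\phi\in\chi_{c_0(r)}^{(s)}(X,Y)$, $\zeta\in\mathcal{B}(Y,Y_0)$, property (iii) of $s$-numbers gives $s_j(\zeta\phi\eta)\le\|\zeta\|\,s_j(\phi)\,\|\eta\|$ for all $j$, whence $a_i(\sum_{j=1}^i s_j(\zeta\phi\eta))r_i\le\|\zeta\|\,\|\eta\|\,a_i(\sum_{j=1}^i s_j(\phi))r_i\to 0$, i.e.\ $\zeta\phi\eta\in\chi_{c_0(r)}^{(s)}(X_0,Y_0)$. This settles (i).

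For part (ii) I would first observe that $\{na_n\}\in c_0(r)$ implies $\{a_n\}\in c_0(r)$ (since $a_nr_n\le na_nr_n$), so by (i) the class is already an operator ideal, and it remains to show each component is norm-closed. Suppose $\phi_k\in\chi_{c_0(r)}^{(s)}(X,Y)$ and $\|\phi_k-\phi\|\to 0$. Fixing $\epsilon>0$, choose $k$ with $\|\phi-\phi_k\|<\epsilon$; then Lemma \ref{lemma12} gives $s_j(\phi)\le s_j(\phi_k)+\|\phi-\phi_k\|<s_j(\phi_k)+\epsilon$ for every $j$, so
\[
a_i\Big(\sum_{j=1}^i s_j(\phi)\Big)r_i\le a_i\Big(\sum_{j=1}^i s_j(\phi_k)\Big)r_i+\epsilon\,(ia_i)\,r_i.
\]
Letting $i\to\infty$, the first term vanishes because $\phi_k$ lies in the class and the second because $\{na_n\}\in c_0(r)$, so $\limsup_i a_i(\sum_{j=1}^i s_j(\phi))r_i\le 0$, which forces $\phi\in\chi_{c_0(r)}^{(s)}(X,Y)$; closedness under scalar multiples follows from ideal property (iii) with $\zeta=\lambda I_Y$, $\eta=I_X$, so the component is a closed linear subspace.

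I do not expect a genuinely hard step here. The two places needing care are the index bookkeeping in the additivity estimate (making sure the partial sums of $s_j(\phi+\psi)$ are dominated by twice those of $\phi$ and of $\psi$), and, in part (ii), the observation that the error term $\epsilon\,(ia_i)r_i$ is harmless precisely under the strengthened hypothesis $\{na_n\}\in c_0(r)$ — the weaker hypothesis $\{a_n\}\in c_0(r)$ of part (i) would not suffice to make the class closed.
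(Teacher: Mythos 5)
Your proposal is correct and follows essentially the same route as the paper: rank-one operators handled via $s_j(x'\otimes y)=0$ for $j\ge 2$ together with $\{a_n\}\in c_0(r)$, additivity via the odd/even pairing bound $\sum_{j=1}^i s_j(\phi+\psi)\le 2\sum_{k=1}^i\bigl(s_k(\phi)+s_k(\psi)\bigr)$, composition via property (iii), and closedness via $s_j(\phi)\le s_j(\phi_k)+\|\phi-\phi_k\|$ with the error term controlled by $\{na_n\}\in c_0(r)$. Your explicit observation that $\{na_n\}\in c_0(r)$ implies $\{a_n\}\in c_0(r)$ (so the ideal property persists in part (ii)) is a small but worthwhile clarification the paper leaves implicit.
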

\begin{proof}(i)
	Let $X$ and $Y$ are any two Banach spaces.
Let $x' \in X'$ and $y \in Y$. Then $x'\otimes y$ is a rank one operator so $s_i(x'\otimes y)=0$ for all $i \ge 2$. Consider,
\begin{align*}
\lim_{i \to \infty}\left( a_i \sum_{j=1}^is_j(x' \otimes y)\right)r_i=& \lim_{i \to \infty }a_is_1(x' \otimes y)r_i \\
=& \|x' \otimes y \| \lim_{i \to \infty}a_ir_i	\\
=& 0.
\end{align*}	
Thus, $(x' \otimes y)\in \chi_{c_0(r)}^{(s)}(X,Y). $\\
Suppose $\phi, \psi \in \chi_{c_0(r)}^{(s)}(X,Y)$. By using properties of $s$-number, we have 
\begin{align*}
\lim_{ i \to\infty} a_i \left( \sum_{j=1}^i s_j(\phi +\psi )\right) r_i
=&\lim_{ i \to\infty} \left( a_i\left( \sum_{j=1}^is_{2j-1}(\phi+ \psi)+ \sum_{j=1}^is_{2j}(\phi+ \psi)\right) \right)  r_i \\
\le & \lim_{ i \to\infty} \left( a_i\left( \sum_{j=1}^is_{2j-1}(\phi+ \psi)+ \sum_{j=1}^is_{2j-1}(\phi+ \psi)\right) \right)  r_i \\ 
=&2\lim_{l \to \infty}a_i\left( \sum_{j=1}^{i} s_{2j-1}(\phi+\psi)\right) r_i\\
\le & 2a_i \lim_{i \to \infty}\left( \sum_{j=1}^{i}\left( s_j(\phi)+s_j(\psi)\right) \right) r_i \\
=&2 \lim_{i \to \infty}a_i\left( \sum_{j=1}^{i}s_j(\phi)\right) r_i+ 2 \lim_{i \to \infty}a_ia_n\left( \sum_{j=1}^{i}s_j(\psi)\right) r_i \\
=& 0.
\end{align*}
This implies, $\lim_{ i \to\infty} a_i \left( \sum_{j=1}^i s_j(\phi +\psi )\right) r_i=0$. Thus, $\phi +\psi \in \chi_{c_0(r)}^{(s)}(X,Y) $. \\
Suppose  $\zeta \in \mathcal{B}(Y,Y_0),\eta \in \mathcal{B}(X_0,X)$ and $ \phi \in \chi_{c_0(r)}^{(s)}(X,Y)$. Then
\begin{align*}
	\lim_{i \to \infty}a_i\left( \sum_{j=1}^{i}s_j(\zeta \phi \eta)\right) r_i =\|\zeta\|\|\eta\|\lim_{i \to \infty}a_i\left( \sum_{j=1}^{i}s_j(\phi)\right) r_i 
	= 0 .
\end{align*}
Thus, $\zeta \phi \eta \in \chi_{c_0(r)}^{(s)}(X_0,Y_0) $. Hence $\chi_{c_0(r)}^{(s)}$ is an operator ideal.\\
(ii) Let $\{\phi_k\}$ be any sequence in $\chi_{c_0(r)}^{(s)}(X,Y)$ converging to $\phi \in \mathcal{B}(X,Y)$ with respect to the operator norm. Given $\epsilon>0$, we can fix $k_0 \in \mathbb{N}$ such that 
$$\|\phi-\phi_k\|\le \epsilon ~~~~\text{for}~ k \ge k_0.$$
Now, 
\begin{align*}
	 \lim_{i \to \infty}\left( a_i \sum_{j=1}^is_j(\phi)\right) r_i =& \lim_{i \to \infty}\left( a_i \sum_{j=1}^is_j(\phi-\phi_k+\phi_k)\right) r_i \\
	 \le&  \lim_{i \to \infty}\left( a_i \sum_{j=1}^is_1(\phi-\phi_k)\right) r_i + \lim_{i \to \infty}\left( a_i \sum_{j=1}^is_j(\phi_k)\right) r_i\\
	 =&\|\phi-\phi_k\|\lim_{i \to \infty}ia_ir_i+ \lim_{i \to \infty}\left( a_i \sum_{j=1}^is_j(\phi_k)\right) r_i \\
	 =&0.
\end{align*}
Hence, $\phi \in \chi_{c_0(r)}^{(s)}(X,Y)$ as $\lim_{i \to \infty}\left( a_i \sum_{j=1}^is_j(\phi)\right) r_i =0$.
The required result is proved.

\end{proof}
Define a mapping $Q^{(s)}:\chi_{c_0(r)}^{(s)} \to \mathbb{R}^+$ where $\mathbb{R}^+$ is the set of all positive real numbers, by
\[ Q^{(s)}(\phi)= \sup_i\left| a_i \sum_{j=1}^i s_j(\phi)\right| r_i~~ \text{where $\phi \in\chi_{c_0(r)}^{(s)}$}.\] 
The next result proves that under certain condition $Q^{(s)}$ forms a quasi norm on $\chi_{c_0(r)}^{(s)}$.
\begin{theorem}
	If $\sup_i|a_i|r_i=1$ then
	the mapping $Q^{(s)}$ is a quasi norm on the set $\chi_{c_0(r)}^{(s)}$ and the operator ideal $\chi_{c_0(r)}^{(s)}$ is complete under the quasi-norm $Q^{(s)}$. 
\end{theorem}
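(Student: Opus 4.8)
The plan is to verify the three defining properties of a quasi-norm and then establish completeness by a standard Cauchy-sequence argument, the only delicate point being that a finite value of $Q^{(s)}$ does not by itself guarantee membership in $\chi_{c_0(r)}^{(s)}$.

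First, the quasi-norm axioms. For a rank-one operator $x'\otimes y$ one has $s_1(x'\otimes y)=\|x'\otimes y\|=\|x'\|\,\|y\|$ and $s_j(x'\otimes y)=0$ for $j\ge 2$; hence $Q^{(s)}(x'\otimes y)=\sup_i |a_i|r_i\,s_1(x'\otimes y)=\|x'\|\,\|y\|$ using the hypothesis $\sup_i|a_i|r_i=1$. For the quasi-triangle inequality I would split the first $i$ $s$-numbers of $\phi+\psi$ into odd- and even-indexed groups; monotonicity of $\{s_j\}$ gives $\sum_{j=1}^i s_j(\phi+\psi)\le 2\sum_{j=1}^{\lceil i/2\rceil} s_{2j-1}(\phi+\psi)$, and property (ii) of $s$-numbers with $m=n=j$ gives $s_{2j-1}(\phi+\psi)\le s_j(\phi)+s_j(\psi)$; multiplying by $|a_i|r_i$ and taking the supremum over $i$ yields $Q^{(s)}(\phi+\psi)\le 2\bigl(Q^{(s)}(\phi)+Q^{(s)}(\psi)\bigr)$, so the constant is $c=2$. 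Finally, property (iii) of $s$-numbers, $s_j(\zeta\phi\eta)\le\|\zeta\|\,s_j(\phi)\,\|\eta\|$, gives $Q^{(s)}(\zeta\phi\eta)\le\|\zeta\|\,Q^{(s)}(\phi)\,\|\eta\|$ termwise. Together with the ideal property already proved, this shows $(\chi_{c_0(r)}^{(s)},Q^{(s)})$ is a quasi-normed operator ideal; note also that $Q^{(s)}$ is finite on the ideal since there the defining sequence $\{|a_i|r_i\sum_{j=1}^i s_j(\phi)\}$ is convergent, hence bounded.

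For completeness, fix an index $i_0$ with $|a_{i_0}|r_{i_0}>0$ (possible since $\sup_i|a_i|r_i=1$); then $\|\phi\|=s_1(\phi)\le (|a_{i_0}|r_{i_0})^{-1}Q^{(s)}(\phi)$, so every $Q^{(s)}$-Cauchy sequence $\{\phi_k\}\subseteq\chi_{c_0(r)}^{(s)}(X,Y)$ is Cauchy in operator norm and therefore converges in norm to some $\phi\in\mathcal{B}(X,Y)$. To see $Q^{(s)}(\phi_k-\phi)\to 0$: given $\epsilon>0$ choose $N$ with $Q^{(s)}(\phi_k-\phi_l)<\epsilon$ for $k,l\ge N$; for a fixed $i$ and $k\ge N$, Lemma \ref{lemma12} gives $s_j(\phi_k-\phi_l)\to s_j(\phi_k-\phi)$ as $l\to\infty$ for each $j$, so letting $l\to\infty$ in the finite sum $|a_i|r_i\sum_{j=1}^i s_j(\phi_k-\phi_l)<\epsilon$ yields $|a_i|r_i\sum_{j=1}^i s_j(\phi_k-\phi)\le\epsilon$; taking the supremum over $i$ gives $Q^{(s)}(\phi_k-\phi)\le\epsilon$ for all $k\ge N$.

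It remains to check $\phi\in\chi_{c_0(r)}^{(s)}(X,Y)$, which is the main obstacle since so far we control only $Q^{(s)}(\phi)$, not the limit condition defining the ideal. Here I would reuse the odd/even splitting on $\phi=(\phi-\phi_k)+\phi_k$ to get $\sum_{j=1}^i s_j(\phi)\le 2\sum_{j=1}^i s_j(\phi-\phi_k)+2\sum_{j=1}^i s_j(\phi_k)$, hence $|a_i|r_i\sum_{j=1}^i s_j(\phi)\le 2Q^{(s)}(\phi-\phi_k)+2|a_i|r_i\sum_{j=1}^i s_j(\phi_k)$. Fixing $k\ge N$ with $Q^{(s)}(\phi_k-\phi)\le\epsilon$ and letting $i\to\infty$, the last term tends to $0$ because $\phi_k\in\chi_{c_0(r)}^{(s)}$, so $\limsup_i |a_i|r_i\sum_{j=1}^i s_j(\phi)\le 2\epsilon$; as $\epsilon>0$ is arbitrary, $\lim_i |a_i|r_i\sum_{j=1}^i s_j(\phi)=0$, i.e. $\phi\in\chi_{c_0(r)}^{(s)}(X,Y)$. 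Combined with $Q^{(s)}(\phi_k-\phi)\to 0$, this shows $\chi_{c_0(r)}^{(s)}$ is complete under $Q^{(s)}$.
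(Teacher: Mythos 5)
Your proof is correct and follows essentially the same route as the paper's: verify the rank-one identity from $\sup_i|a_i|r_i=1$, get the quasi-triangle inequality with constant $2$ via the odd/even splitting of the $s$-numbers, deduce $\|\phi\|\le Q^{(s)}(\phi)$ to transfer Cauchyness to the operator norm, use the Lipschitz continuity of $s$-numbers (Lemma \ref{lemma12}) to pass to the limit in $Q^{(s)}$, and reuse the splitting on $\phi=(\phi-\phi_k)+\phi_k$ to place the limit in the ideal. If anything, your write-up is slightly more careful than the paper's at the two delicate points (the upper summation limit $\lceil i/2\rceil$ in the odd-index sum, and the explicit $\limsup$ argument showing the limit operator actually belongs to $\chi_{c_0(r)}^{(s)}$).
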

\begin{proof}
Let $X$ and $Y$ are any two Banach spaces and $x' \in X'$,$y \in Y$. Now 
\begin{align*}
	Q^{(s)}(x' \otimes y)=& \sup_i\left| a_i \sum_{j=1}^i s_j(x' \otimes y)\right| r_i\\
	=& \sup_i\left| a_i s_1(x' \otimes y)\right| r_i\\
	=& \|x' \otimes y\| \sup_ia_ir_i\\
	=&\|x' \otimes y\|.
\end{align*}
Clearly $\|x' \otimes y \|=\|x'\| \|y\|$, so $Q^{(s)}(x' \otimes y)=\|x'\| \|y\|$.\\
Suppose $\phi, \psi \in \chi_{c_0(r)}^{(s)}(X,Y)$ and consider
\begin{align*}
	 Q^{(s)}(\phi+\psi)= &\sup_i\left| a_i \sum_{j=1}^i s_j(\phi+\psi)\right| r_i\\
	 \le & 2 \sup_i \left| a_i \sum_{j=1}^i(s_j(\phi)+s_j(\psi))\right| r_i\\
	 =& 2 \sup_i\left| a_i\left( \sum_{j=1}^i s_j(\phi)\right)r_i + a_i\left( \sum_{j=1}^i s_j(\psi)\right)r_i\right| \\
	 \le &2 \sup_i\left| a_i\sum_{j=1}^i s_j(\phi)\right| r_i +2 \sup_i \left| a_i \sum_{j=1}^i s_j(\psi)\right|r_i \\
	 \le & 2\left( Q^{(s)}(\phi)+Q^{(s)}(\psi)\right) .	 
\end{align*}
Thus, \[Q^{(s)}(\phi+\psi) \le 2\left( Q^{(s)}(\phi)+Q^{(s)}(\psi)\right) .\]

Let $\phi \in \chi_{c_0(r)}^{(s)}(X,Y) $ and $\zeta \in \mathcal{B}(Y,Y_0)$, $\eta\in \mathcal{B}(X_0,X)$.
\begin{align*}
Q^{(s)}(\zeta \phi \eta)=&\sup_i\left| a_i \sum_{j=1}^i s_j(\zeta \phi \eta)\right| r_i\\
\le& \|\zeta\|\|\eta\|\sup_{i}\left| a_i\left( \sum_{j=1}^{i}s_j(\phi)\right)\right|  r_i. 
\end{align*}
This implies, \[Q^{(s)}(\zeta \phi \eta)\le \| \zeta\| Q^{(s)}(\phi)\|\eta\|. \]
Hence, $Q^{(s)}$ is quasi-norm on operator ideal $\chi_{c_0(r)}^{(s)}$.\\
Also, suppose $\phi\in \chi_{c_0(r)}^{(s)}(X,Y)$. We have
\begin{align*}
	Q^{(s)}(\phi)=&\sup_i\left| a_i \sum_{j=1}^is_j(\phi)\right| r_i\\
	\ge& \sup_i s_1(\phi)a_ir_i\\
	=& \| \phi\| \sup_i a_i r_i\\
	=& \|\phi \|.
\end{align*}
This implies, 
\begin{equation}\label{quasi10}
	\| \phi\| \le 	Q^{(s)}(\phi) \text{~for $\phi \in \chi_{c_0(r)}^{(s)}(X , Y). $}
\end{equation}
\end{proof}
Let $\{\phi_n\}$ be a Cauchy sequence in $\chi_{c_0(r)}^{(s)}(X , Y)$. Then for $\epsilon>0$ there exists $k \in \mathbb{N}$ such that 
\begin{equation}\label{quasi11}
	Q^{(s)}(\phi_n-\phi_m)< \epsilon ~~\text{for all}~~ n,m \ge k.
\end{equation}
By using equation (\ref{quasi10}), we have
 \[\| \phi_n- \phi_m \| < \epsilon ~~\text{for all}~~ n,m \ge k. \]
This implies $\{\phi_n\}$ is a Cauchy sequence in $\mathcal{B}(X,Y)$.
Since $\mathcal{B}(X,Y)$ is a Banach space, there exists $\phi\in \mathcal{B}(X,Y)$ such that $\phi_n \to \phi$ as $n \to \infty$. Using Lemma \ref{lemma12}, we have 
\[ \left|s_i(\phi_n-\phi_m)-s_i(\phi-\phi_m)\right| \le \| \phi_n- \phi\|,~~ i \in \mathbb{N}. \] 
As $\phi_n \to \phi$, taking $n \to \infty$ and keeping $m$ fixed we have
\begin{equation}\label{quasi12}
	s_i(\phi_n-\phi_m) \to s_i(\phi-\phi_m),~~ i \in \mathbb{N}. 
\end{equation}
 Now, we shall show that $\phi_n \to \phi$ as $n \to \infty$ in  $\chi_{c_0(r)}^{(s)}(X,Y)$. Again, from equation (\ref{quasi11}), we have
\[ \sup_i \left| a_i \sum_{j=1}^i s_j(\phi_n-\phi_m)\right|r_i<\epsilon~~\text{for all}~~m \ge k. \]
Keeping $m$ fixed and letting $n \to \infty$, we obtain on using equation (\ref{quasi12})
\begin{equation}\label{quasi13}
	\sup_i \left| a_i \sum_{j=1}^i s_j(\phi-\phi_m)\right|r_i<\epsilon~~\text{for all}~~n,m \ge k.
\end{equation}
which gives $Q^{(s)}(\phi-\phi_m)<\epsilon$ for all $m \ge k$. This implies $\phi_m\to \phi$ under the quasi norm $Q^{(s)}$. Again,
\begin{align*}
\sum_{j=1}^is_j(\phi)=\sum_{j=1}^is_{2j-1}(\phi)+\sum_{j=1}^is_{2j}(\phi)
\le & 2\sum_{j=1}^is_{2j-1}(\phi)\\
=& 2\sum_{j=1}^is_{2j-1}(\phi-\phi_m+\phi_m)\\
\le & 2 \left( \sum_{j=1}^is_j(\phi-\phi_m)+\sum_{j=1}^is_j(\phi_m)\right). 
\end{align*}
Thus,
\begin{equation}\label{quasi14}
	\sum_{j=1}^is_j(\phi) \le 2 \left( \sum_{j=1}^is_j(\phi-\phi_m)+\sum_{j=1}^is_j(\phi_m)\right).
\end{equation}
The above inequality implies that
\begin{align*}
\left|a_i \sum_{j=1}^is_j(\phi)\right| r_i\le&2\left| a_i \left( \sum_{j=1}^is_j(\phi-\phi_m)+\sum_{j=1}^is_j(\phi_m) \right) \right| r_i\\
\le & 2\left| a_i\sum_{j=1}^is_j(\phi-\phi_m)\right|r_i +2\left| a_i\sum_{j=1}^is_j(\phi_m)\right| r_i.
\end{align*}
By using relation (\ref{quasi13}), we obtain that
\[\left|a_i \sum_{j=1}^is_j(\phi)\right| r_i \le 2 \epsilon+2\left|a_i \sum_{j=1}^is_j(\phi_m)\right| r_i~~ \forall m \ge k.\] 
Taking limit on both sides, we obtain that
\[ \lim_{i \to \infty}\left|a_i \sum_{j=1}^is_j(\phi)\right| r_i \le 2\epsilon +2 \lim_{ i \to\infty} \left|a_i \sum_{j=1}^is_j(\phi_m)\right| r_i\]
Then, 
\[\lim_{i \to \infty}\left|a_i \sum_{j=1}^is_j(\phi)\right| r_i \le 2 \epsilon.\]
Thus, $\phi \in \chi_{c_0(r)}^{(s)}(X,Y)$, and
this implies, $\chi_{c_0(r)}^{(s)}$ is complete under quasi-norm $Q^{(s)}$.


\bibliographystyle{plain}
\bibliography{References}
\end{document}